\providecommand{\B}{}
\renewcommand{\B}{\bm}
\newcommand{\D}{\partial}
\renewcommand{\le}{\leqslant}
\renewcommand{\ge}{\geqslant}
\newtheorem{theorem}{Theorem}[section]
\newtheorem{lemma}[theorem]{Lemma}
\newtheorem{assumption}{Assumption}
\theoremstyle{definition}
\title{On the stability of finite-volume schemes on~non-­uniform meshes}
\author{P.A.~Bakhvalov, M.D.~Surnachev}
\date{August 30, 2024}
\begin{document}

\numberwithin{equation}{section}
\maketitle

\begin{abstract}
In this paper, we study the L2 stability of high-order finite-volume schemes for the 1D transport equation on non-­uniform meshes. We consider the case when a small periodic perturbation is applied to a uniform mesh. For this case, we establish a sufficient stability condition. This allows to prove the $(p+1)$-th order convergence of finite-volume schemes based on $p$-th order polynomials. 
\end{abstract}


\sloppy 

\section{Introduction}

In this paper, we consider the Cauchy problem for the 1D scalar transport equation with the unit velocity and $2\pi$-periodic initial data $v_0$:
\begin{equation}
\begin{gathered}
\frac{\D v}{\D t} + \frac{\D v}{\D x} = 0, \quad 0 < t < t_{\max}, \quad x \in \mathbb{R};
\\
v(0,x) = v_0(x), \quad x \in \mathbb{R}.
\end{gathered}
\label{eq_TE}
\end{equation}
We study the $L_2$-stability of high-order linear numerical schemes for \eqref{eq_TE} on non-uniform meshes. This leaves monotone schemes (which are at most first-order) and schemes with slope limiters or other monotonization techniques out of scope of this paper.

For finite-element methods such as the standard Galerkin method and the discontinuous Galerkin method, the $L_2$-norm of the numerical solution does not increase in time. This fact is a prerequisite to any further accuracy analysis \cite{Cockburn2003,Cao2014,Cao2017}. 

In contrast, for high-order polynomial-based finite-volume (FV) methods, numerical experiments show their stability on 1D non-uniform meshes, but the theoretical justification is still pending. The situation becomes even worse on unstructured meshes, where there are only practical recommendations to enforce stability (see, for instance, \cite{Zangeneh2019}), and a bad computational mesh can cause a breakdown. 


There are two theoretical approaches to study the stability of high-order schemes on non-uniform meshes. The first one is to consider a non-uniform mesh as an image of a uniform one. Then a scheme for \eqref{eq_TE} on a non-uniform mesh becomes a scheme for the transport equation with variable velocity on a uniform mesh. This allows to apply the classical results about the stability of difference schemes for variable-coefficient equations \cite{Lax1962,Lax1966,Yanenko1969,Shintani1976}. However, there are two shortcomings on this way.


\begin{itemize}
\item These results allow the growth of the solution error as $\exp(ct)$ where $c$ does not depend on the mesh step. This growth is justified because the $L_2$-norm of a solution of the transport equation with a variable coefficient may grow in time. In contrast, in our case, the solution of \eqref{eq_TE} is $v(t,x) = v_0(x-t)$.

\item The constant $c$ in the exponent depends on the second (in \cite{Lax1962}) or higher (in \cite{Lax1966,Yanenko1969,Shintani1976}) derivatives of the mapping. Considering a checkerboard mesh (see Fig.~\ref{fig:checkerboard}) and keeping $h_{\max}/h_{\min} = const$ when refining the mesh, we get $c \gtrsim 1/h_{\max}$, which makes the stability estimate meaningless.
\end{itemize}

\begin{figure}[t]
\centering
\includegraphics[width=\linewidth]{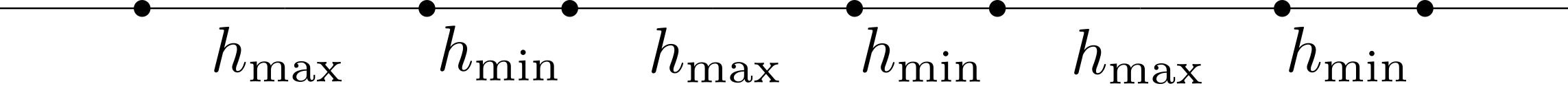}
\caption{A checkerboard mesh}\label{fig:checkerboard}
\end{figure}

The second approach, which we follow in this paper, may be used if the mesh deformation has a period of a few mesh steps (like in Fig.~\ref{fig:checkerboard}). In terms of the mapping to a uniform mesh, we have a fast-oscillating velocity coefficient. This resembles problems arising in the homogenization theory of the transport equation, see for instance \cite{E1992, Briane2020}.

If a mesh has a period, we are able to consider the period of the mesh as a ``spectral'' cell. The entire mesh becomes a uniform mesh of spectral cells. And a finite-volume scheme takes a form of a scheme with several DOFs per cell. The behavior of schemes of this class may be studied by the spectral analysis. In \cite{Zhang2005, Zhong2011, Yang2013}, this was done for the discontinuous Galerkin method (and in \cite{Zhang2005} also for the spectral volume method). In \cite{Bakhvalov2023}, an analysis was carried out for a general scheme.

In this paper, we consider a high-order finite-volume scheme for \eqref{eq_TE} on  non-uniform meshes. Our main goal is to prove a sufficient condition of its stability in $L_2$ provided that the mesh deformation is small enough. This condition is established by Theorem~\ref{th:stab}.

As a particular case, we consider the FV schemes with the polynomial reconstruction of order $p=2s$, $s \in \mathbb{N}$. They are the basis of many FV schemes for multidimensional gas dynamics \cite{Tsoutsanis2011, Antoniadis2017}. For the transport equation, we prove the error estimate showing the $(p+1)$-th order convergence. This property is sometimes referred to as supra-convergence \cite{Kreiss1986}. 



The rest of the paper is structured as follows. We state the main stability result (Theorem~\ref{th:stab}) in Section~\ref{sect:2} and prove it in Section~\ref{sect:spectral}. It is used to obtain an accuracy estimate for polynomial-based schemes in Section~\ref{sect:poly}. Finally, in Section~\ref{sect:alternating} we study the stability of three FV schemes on meshes with alternating steps and compare the results with the predictions given by Theorem~\ref{th:stab}. We also  support the accuracy estimate for the polynomial-based schemes with the numerical results.


\section{Problem formulation and the main result}
\label{sect:2}

Introduce the following notation. A {\it mesh} is a monotonically increasing sequence
$$
X = \{x_j \in \mathbb{R}, j \in \mathbb{Z}\}
$$
such that for some $N \in \mathbb{N}$ there holds \mbox{$x_{j+N} = x_j + 2\pi$} for all $j \in \mathbb{Z}$. Here $x_j$ are the {\it mesh nodes}, $N \equiv N(X)$ is the {\it number of nodes}, and $h_{av} \equiv h_{av}(X) = 2\pi/N(X)$ is the {\it average step}. For $j \in \mathbb{Z}$, denote $h_{j+1/2} = x_{j+1}-x_j$.  Put
$$
h_{\max}(X) = \max\limits_{j \in \mathbb{Z}} h_{j+1/2}, \quad h_{\min}(X) = \min\limits_{j \in \mathbb{Z}} h_{j+1/2}.
$$

A {\it mesh function} on a mesh $X$ is an $N(X)$-periodic sequence of complex numbers: $f = \{f_j \in \mathbb{C}, j \in \mathbb{Z}\}$. Equip the space of mesh functions on $X$ with the norm
\begin{equation}
\|f\|_{av} = \left(\frac{1}{N(X)} \sum\limits_{j=0}^{N(X)-1} |f_j|^2\right)^{1/2}
= \frac{1}{\sqrt{2\pi}} \left(\sum\limits_{j=0}^{N(X)-1} h_{av} |f_j|^2\right)^{1/2}.
\label{eq_norm_vper}
\end{equation}

The {\it period} of a mesh $X$ is the minimal number $m \equiv m(X)$ such that $h_{j+m+1/2} = h_{j+1/2}$ holds for each $j \in \mathbb{Z}$. Clearly, each mesh $X$  has a period $m(X) \le N(X)$. A mesh with $m = 1$ is a {\it uniform} mesh; for this mesh $h_{j+1/2} = h_{av}$ holds for each $j$.

Consider ODE systems (depending on mesh $X$) of the form
\begin{equation}
\frac{du_j(t)}{dt} + \sum\limits_{k=-S}^S a_{k}(h_{j-S+1/2}, \ldots, h_{j+S-1/2})\,u_{j+k}(t) = 0, \quad j \in \mathbb{Z}.
\label{eq1}
\end{equation}
By definition, a solution of this system is an $N(X)$-periodic sequence \mbox{$u(t) = \{u_j(t) \in \mathbb{C}, j \in \mathbb{Z}\}$} satisfying \eqref{eq1} for each $j$. 

Assume that $a_k(r_{-S},\ldots, r_{S-1})$ are real-valued, are defined in a neighborhood of  $r_{-S}=\ldots=r_S = 1$, are analytical at this point and satisfy
\begin{equation}
a_k(\alpha r_{-S},\ldots, \alpha r_{S-1}) = \frac{1}{\alpha} a_k(r_{-S},\ldots, r_{S-1})
\label{eq_invalpha}
\end{equation}
in their domain of definition.
Then \eqref{eq1} is equivalent to
\begin{equation}
\frac{du_j(t)}{dt} + \frac{1}{h_{av}}\sum\limits_{k=-S}^S a_{k}\!\left(\frac{h_{j-S+1/2}}{h_{av}}, \ldots, \frac{h_{j+S-1/2}}{h_{av}}\right)\,u_{j+k}(t) = 0.
\label{eq1a}
\end{equation}

On a uniform mesh, \eqref{eq1a} reduces to
\begin{equation}
\frac{du_j(t)}{dt} + \frac{1}{h_{av}}\sum\limits_{k=-S}^S \mathring{a}_{k} u_{j+k}(t) = 0
\label{eq1_uni}
\end{equation}
with
$$
\mathring{a}_k = a_k(1, \ldots, 1).
$$
We assume that \eqref{eq1_uni} is consistent with \eqref{eq_TE}, i. e. $\sum \mathring{a}_k = 0$, $\sum k\mathring{a}_k = 1$.

Denote
\begin{equation}
\mathring{\lambda}(\phi) = \sum\limits_{k=-S}^S \mathring{a}_{k} e^{i\phi k}.
\label{eq_lambda_phi}
\end{equation}

Let $\varkappa$ be the order of the leading dissipation term in the truncation error on the uniform meshes, i. e. for some $c \in \mathbb{C} \setminus \{0\}$  there holds
\begin{equation}
\mathrm{Re}\,\mathring{\lambda}(\phi) = c\phi^{\varkappa+1} + O(|\phi|^{\varkappa+2}) \quad \mathrm{as} \quad \phi \to 0.
\label{eq_ring_lambda}
\end{equation}
Since $\mathring{a}_k \in \mathbb{R}$, then $\varkappa$ is odd.


\begin{assumption}\label{ass:2}
The scheme is strictly dissipative, i. e. there holds
\begin{equation}
\mathrm{Re}\mathring{\lambda}(\phi) > 0, \quad \phi \in \mathbb{R}, \quad \phi/(2\pi) \not\in \mathbb{Z}.
\label{eq_strict_diss}
\end{equation}
\end{assumption}



A {\it local mapping} is a mapping $\Pi$ taking a mesh $X$ and a function $f \in C^q(\mathbb{R})$ (for some \mbox{$q \in \mathbb{N} \cup \{0\}$}) to a sequence \mbox{$\Pi_X f = \{(\Pi_X f)_j \in \mathbb{C}, j \in \mathbb{Z}\}$} of the form
$$
(\Pi_X f)_j = \langle \mu_j^{(X)}, f((\ \cdot\ + j)h_{av}) \rangle,
$$
where $\mu_j^{(X)}$ belongs to $(C^q(G))^*$ for some bounded domain $G$, is periodic w.r.t. $j$ with the period $m(X)$, invariant to the mesh scaling ($\{x_j\} \to \{x_j/k\}$, $k \in \mathbb{N}$), and satisfies $\langle \mu_j^{(X)}, 1 \rangle \ne 0$, $j \in \mathbb{Z}$. For a $2\pi$-periodic function $f$ and a local mapping $\Pi$, the sequence $\Pi_X f$ is $N(X)$-periodic, i. e. a mesh function on $X$.

An example of a local mapping is the mapping $\hat{\Pi}$ defined by
\begin{equation}
(\hat{\Pi}_X f)_j = \frac{1}{h_{j+1/2}} \int\limits_{x_{j}}^{x_{j+1}} f(x) dx, \quad j \in \mathbb{Z}.
\label{map_integral}
\end{equation}


The {\it truncation error} on $f$ in the sense of a local mapping $\Pi$ is the sequence $\epsilon_X(f, \Pi)$ with the components
$$
(\epsilon_X(f, \Pi))_j = - \left(\Pi_X \frac{df}{dx}\right)_j + \sum\limits_{k=-S}^S a_{k}\!\left(h_{j-S+1/2}, \ldots, h_{j+S-1/2}\right)\,(\Pi_X f)_{j+k}.
$$
We say that the system \eqref{eq1} is $q$-exact in the sense of $\Pi$, if for each polynomial $f$ of order $q$ there holds $\epsilon_X(f, \Pi) = 0$.

For $\mu : \mathbb{N} \to (0,\infty)$ denote by $\mathcal{F}_{\mu}$ the set of meshes
\begin{equation}
\mathcal{F}_{\mu} = \{X\ :\ h_{\max}(X)-h_{\min}(X) \le \mu(m(X))\, h_{av}(X)\}.
\label{eq_def_fmu}
\end{equation}

\begin{theorem}\label{th:stab}
Consider the scheme \eqref{eq1}, where $a_k$ are holomorphic at $(1, \ldots, 1)$ and satisfy \eqref{eq_invalpha}. Let $\varkappa$ be defined by \eqref{eq_ring_lambda}. Let the scheme
\begin{itemize}
\item satisfy Assumption~\ref{ass:2} on uniform meshes;
\item be $(\varkappa-1)$-exact in the sense of a local mapping on a family of meshes of the form \eqref{eq_def_fmu}.
\end{itemize}

Then for each $K>1$ there exists $\mu : \mathbb{N} \to (0,\infty)$ such that for each solution of \eqref{eq1} on each mesh $X \in \mathcal{F}_{\mu}$, where $\mathcal{F}_{\mu}$ is given by \eqref{eq_def_fmu}, there holds
\begin{equation}
\|u(t)\|_{av} \le K \|u(0)\|_{av}. \label{eq_9e1hio}
\end{equation}
\end{theorem}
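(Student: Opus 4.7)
My plan is to follow the Bloch--Floquet framework alluded to in the introduction and developed in \cite{Bakhvalov2023}. Given $X\in\mathcal{F}_{\mu}$ with period $m=m(X)$, regard $X$ as a uniform lattice of super-cells of length $mh_{av}$, each carrying $m$ degrees of freedom. The discrete Fourier transform in the super-cell index is unitary on the space of mesh functions equipped with $\|\cdot\|_{av}$ and block-diagonalises the evolution operator of \eqref{eq1} into a finite family of $m\times m$ matrices $A_X(\xi)$ parametrised by the dual variable $\xi$ of the super-cell lattice. The estimate \eqref{eq_9e1hio} thus reduces to showing $\|\exp(-tA_X(\xi))\|\le K$ uniformly in admissible $\xi$ and in $t\ge 0$.

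\textbf{Lyapunov matrix and the uniform baseline.} I would establish this matrix semigroup bound by constructing a Hermitian positive-definite Lyapunov matrix $H(\xi)$ with $A_X(\xi)^{*}H(\xi)+H(\xi)A_X(\xi)\ge 0$ and $\mathrm{cond}(H(\xi))\le K^{2}$; non-expansiveness of $\exp(-tA_X(\xi))$ in the $H$-inner product then yields the desired Euclidean bound with constant $K$. For $X$ uniform but formally treated as having period $m$, the matrix $A_X(\xi)$ is diagonalised by the discrete Fourier basis on the super-cell with eigenvalues $h_{av}^{-1}\mathring{\lambda}(\phi_k(\xi))$, where $\phi_0(\xi),\dots,\phi_{m-1}(\xi)$ are the $m$ full-mesh wavenumbers folded into $\xi$; Assumption~\ref{ass:2} places these in the closed right half-plane, so $H=I$ works. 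For $X\in\mathcal{F}_\mu$, holomorphicity of the $a_k$ and \eqref{eq_invalpha} yield a decomposition $A_X(\xi)=A_0(\xi)+B_X(\xi)$ with $\|B_X(\xi)\|\le C(m)\,\mu\, h_{av}^{-1}$, and the task becomes controlling this perturbation uniformly in $\xi$.

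\textbf{Splitting the dual variable.} I would split the range of $\xi$ according to the magnitude of the smallest folded wavenumber $|\phi_0(\xi)|$. On the region where $|\phi_0(\xi)|\ge\delta(m)$ for a suitable threshold $\delta(m)>0$, all eigenvalues of $A_0(\xi)$ have real part bounded below by a positive constant depending on $m$; a standard Lyapunov-equation (or Bauer--Fike) perturbation argument then furnishes $H(\xi)$ with bounded condition number as soon as $\mu$ is small enough. On the complementary region $|\phi_0(\xi)|<\delta(m)$, the $m-1$ ``parasitic'' eigenvalues of $A_0(\xi)$ are still strictly dissipative and can be treated in the same way; the whole difficulty concentrates on the single ``physical'' eigenvalue of $A_X(\xi)$, whose unperturbed value $h_{av}^{-1}\mathring{\lambda}(\phi_0(\xi))$ has real part only of order $\phi_0(\xi)^{\varkappa+1}$ by \eqref{eq_ring_lambda}.

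\textbf{Main obstacle and closing.} The crux is to show that the physical eigenvalue of the perturbed $A_X(\xi)$ retains the order-$(\varkappa+1)$ positive lower bound on its real part, even though $\mu$ may be much larger than $\phi_0(\xi)^{\varkappa+1}$. This is precisely what the $(\varkappa-1)$-exactness is designed to deliver: vanishing of $\epsilon_X(f,\Pi)$ for every polynomial $f$ of degree $\le\varkappa-1$ forces the action of $A_X(\xi)$ on the Bloch-type mesh function $\Pi_X e^{i\phi_0(\xi)\,\cdot}$ to agree term-by-term with the uniform-mesh action through the Taylor expansion up to order $\varkappa-1$. Consequently the physical eigenvalue admits an expansion of the form $i\phi_0 h_{av}^{-1}+c\,\phi_0^{\varkappa+1}h_{av}^{-1}+O(|\phi_0|^{\varkappa+2}h_{av}^{-1})+O(\mu\,|\phi_0|^{\varkappa+1}h_{av}^{-1})$ with the same leading positive real part as in the uniform case, provided $\mu$ is small enough. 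Gluing the Lyapunov forms built in the two regimes into a single $H(\xi)$ with condition number $\le K^{2}$, and choosing $\mu(m)$ small enough per period $m$, closes the argument. I expect the principal technical hurdle to lie in this last step: converting $(\varkappa-1)$-exactness in the sense of a \emph{local mapping} into the sharp quantitative eigenvalue expansion above, and matching the various perturbation scales so that $\mu(m)$ can be fixed independently of $\xi$.
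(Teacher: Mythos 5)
Your overall architecture coincides with the paper's: the Bloch/super-cell reduction to $m\times m$ symbols and the Parseval identity, the split of the dual variable into a low-frequency regime (where only the physical eigenvalue is delicate) and a high-frequency regime (where the unperturbed symbol is normal and uniformly dissipative), and a conjugation with uniformly bounded condition number. Your Lyapunov matrix $H(\xi)$ is equivalent to the paper's similarity $S(\gamma,\phi)$ with $\|S\|,\|S^{-1}\|\le\tilde K$ obtained from analytic perturbation theory (take $H=(S^{-1})^{*}S^{-1}$); the strictly dissipative blocks are handled in the paper by the Lie-product bound of Lemma~\ref{th:expsum} rather than a Lyapunov equation, but that difference is cosmetic.

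The genuine gap is in your ``main obstacle'' step, and it is an off-by-one in the order of the perturbation. $(\varkappa-1)$-exactness controls the truncation error on polynomials of degree at most $\varkappa-1$, hence (Lemma~\ref{th:aux:111}) gives only $|\lambda_0(\gamma,\phi)-i\phi|\le C(\gamma)|\phi|^{\varkappa}$ --- one order \emph{lower} than the dissipative term $c\phi^{\varkappa+1}$. The correct expansion is $\lambda_0(\gamma,\phi)=i\phi+c\phi^{\varkappa+1}+(i\phi)^{\varkappa}\bigl(c_1(\gamma)+c_2(\gamma)\phi\bigr)+O(|\phi|^{\varkappa+2})$ with $c_1,c_2=O(|\gamma|)$, whereas your claimed form, with error $O(|\phi_0|^{\varkappa+2})+O(\mu|\phi_0|^{\varkappa+1})$, silently drops the order-$\varkappa$ term. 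If $\mathrm{Im}\,c_1(\gamma)\ne 0$, that term contributes $\mp\phi^{\varkappa}\,\mathrm{Im}\,c_1(\gamma)$ to $\mathrm{Re}\,\lambda_0$, which near $\phi=0$ dominates $c\phi^{\varkappa+1}$ and changes sign with $\phi$, so stability would fail for \emph{every} $\mu>0$. The missing ingredient (the paper's Lemma~\ref{th:aux:07}) is the proof that $c_1(\gamma)$ is real: the symbol evaluated at a purely imaginary wavenumber, $L(\gamma,i\phi)$, is a real matrix, so its isolated simple eigenvalue $\lambda_0(\gamma,i\phi)$ must be real; this forces each Taylor coefficient $\beta_k$ of $\lambda_0(\gamma,\cdot)$ to lie in $i^{-k}\mathbb{R}$, and since $\varkappa$ is odd the term $(i\phi)^{\varkappa}c_1(\gamma)$ is then purely imaginary and harmless. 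Without this realness (symmetry) argument, the positive lower bound on the real part of the physical eigenvalue does not follow from exactness alone, and your concluding step does not close.
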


\section{Proof of Theorem~\ref{th:stab}}
\label{sect:spectral}

Throughout this section, we consider a mesh $X$ with period \mbox{$m \equiv m(X) > 1$}.

Denote
\begin{equation}
\gamma_j = \frac{h_{j+1/2}}{h_{av}} - 1, \quad j \in \mathbb{Z},
\label{eq_def_gamma}
\end{equation}
and $\gamma = \{\gamma_j, j=0, \ldots, m-1\}$. We call $\gamma$ the {\it mesh structure}. Denote $|\gamma| = \max_j |\gamma_j|$. Obviously,
\begin{equation}
\sum\limits_{j=0}^{m-1} \gamma_j = 0.
\label{eq_sumzero}
\end{equation}

Let $\mathcal{L}(\gamma)$ be the linear operator on $\mathrm{seq}\, (\mathbb{Z},\mathbb{C})$ defined as
$$
(\mathcal{L}(\gamma) u)_j=\sum\limits_{k=-S}^S a_{k}\!\left(\gamma_{j-S}+1, \ldots, \gamma_{j+S-1}+1\right)\,u_{j+k}.
$$
Clearly, $\mathcal{L}(\gamma)$ maps an $m$-periodic sequence to an $m$-periodic one. Then the scheme \eqref{eq1a} can be written as
$$
\frac{du}{dt} + \frac{1}{h_{av}} \mathcal{L}(\gamma) u=0.
$$

Now we represent the scheme \eqref{eq1a} in the block form. Define $\mathcal{B}_m : \mathrm{seq}\,(\mathbb{Z},\mathbb{C} )\to \mathrm{seq}\,(\mathbb{Z},\mathbb{C}^m ) $ as
$$
(\mathcal{B}_m u)_\eta = (u_{\eta m}, \ldots, u_{\eta m + m-1})^T, \quad \eta \in \mathbb{Z}.
$$
On $\mathrm{seq}\,(\mathbb{Z},\mathbb{C}^m)$, define the linear operator $\mathcal{L}_m(\gamma)$ by
\begin{equation}\label{eq_m}
\mathcal{L}_m(\gamma) \mathcal{B}_m = \mathcal{B}_m \mathcal{L}(\gamma).
\end{equation}
It may be expressed as
\begin{equation}\label{eq_oper_m}
(\mathcal{L}_m(\gamma) U)_\eta =  \sum\limits_{\zeta = -\lceil S/m \rceil}^{\lceil S/m \rceil} L_\zeta(\gamma) U_{\eta+\zeta},
\end{equation}
where $L_{\zeta}(\gamma)$ is the $m \times m$ real-valued matrix with the elements
$$
(L_{\zeta}(\gamma))_{jk} = a_{\zeta m + k - j}(\gamma_{j-S}+1, \ldots, \gamma_{j+S-1}+1), \quad j,k = 0, \ldots, m-1,
$$
and we put $a_k(\ldots)=0$ for $|k| > S$.

Therefore, introducing the notation $U_{\eta} = (\mathcal{B}_m u)_\eta$, one can rewrite \eqref{eq1a} as
\begin{equation}
\frac{d U_{\eta}}{dt} + \frac{1}{h_{av}} \sum\limits_{\zeta = -\lceil S/m \rceil}^{\lceil S/m \rceil} L_{\zeta}(\gamma)\ U_{\eta + \zeta} = 0, \quad \eta \in \mathbb{Z}.
\label{eq_generalform}
\end{equation}

Systems of the form \eqref{eq_generalform} were studied in \cite{Bakhvalov2023}. Our notation is consistent with that paper as follows: $h \equiv h_{av}$, the shift between mesh blocks at $h=1$ is $T = m$, the set of degrees of freedom on a mesh block is \mbox{$M^0 = \{0, \ldots, m-1\}$}.

Let $\Pi$ be a local mapping. Denote by $v(\gamma, \phi, \Pi)$ the function taking a mesh structure $\gamma$ and $\phi \in \mathbb{C}$ to
$$
v(\gamma, \phi, \Pi) = \left\{\left[\Pi_X \exp\left(i\phi \frac{x}{h_{av}}\right)\right]_j, j = 0, \ldots, m-1\right\} \in \mathbb{C}^{m},
$$
where $X$  is the mesh with the average step $h_{av}$, structure $\gamma$, and offset $x_0 = 0$. By the definition of a local mapping, $v(\gamma, \phi, \Pi)$ does not depend on $h_{av}$.

Let $\mathcal{S}_{\phi}$ be the space of sequences \mbox{$U \in \mathrm{seq}\,(\mathbb{Z}, \mathbb{C}^m)$} such that $U_{\eta} = \exp(i\phi m\eta) U_0$, $\eta \in \mathbb{Z}$. Let $e_k$ be the vectors of the standard basis in $\mathbb{C}^m$.  Then
the vectors $e_k(\phi)$, $k=0, \ldots, m-1$, with the components
\begin{equation}
(e_k(\phi))_{\eta} = \exp(i\phi m\eta)  e_k,
\label{eq_def_ekphi}
\end{equation}
form a basis in $\mathcal{S}_{\phi}$. Denote by $L(\gamma, \phi)$ the matrix of the restriction of $\mathcal{L}_m(\gamma)$ to $\mathcal{S}_{\phi}$ in this basis:
\begin{equation}
L(\gamma, \phi) = \sum\limits_{\zeta = -\lceil S/m \rceil}^{\lceil S/m \rceil} L_{\zeta}(\gamma) \exp(i \phi m\zeta).
\label{eq_def_L_gamma_phi}
\end{equation}

Define the Fourier image of an $(N/m)$-periodic sequence $V$ with elements $V_{\eta} \in \mathbb{C}^m$  as
$$
\hat V(\phi) = \frac{m}{N}\sum_{\eta=0}^{N/m-1} \exp \left(-i m\phi \eta \right) V_\eta, \quad \phi = \frac{2\pi k}{N}, \quad k\in \mathbb{Z}.
$$
Then the inverse Fourier transform is
$$
V_\eta = \sum_{\phi} \exp\left( i m\phi \eta \right) \hat  V(\phi),
$$
where the sum is over $\phi = 0, 2\pi/N, \ldots, 2\pi/m - 1/N$.
There holds the Parseval identity
\begin{equation}
\frac{m}{N}\sum_{j=0}^{N/m-1} \|V_j\|^2 = \sum_{\phi} \|\hat{V}(\phi)\|^2.
\label{eq_Parseval}
\end{equation}
Here and below, by  $\|\ \cdot\ \|$ we denote the Euclidean norm on $\mathbb{C}^m$.

In particular, we define
$$
\hat U(t,\phi) = \frac{m}{N}\sum_{\eta=0}^{N/m-1} \exp \left(-i m\phi \eta \right) U_\eta(t), \quad \phi = \frac{2\pi k}{N}, \quad k\in \mathbb{Z}.
$$

In the Fourier domain, the system \eqref{eq_generalform} takes the form
\begin{equation}\label{Fourier_Im_Eq}
\frac{d}{dt} \hat U(t,\phi) + \frac{1}{h_{av}} L(\gamma,\phi) \hat U(t,\phi)=0, \quad \phi = \frac{2\pi k}{N}, \quad k\in \mathbb{Z}.
\end{equation}
The vector function $\hat U(t,\phi)$ and the matrix function $L(\gamma,\phi)$ have the period $2\pi/m$ with respect to $\phi$, so in \eqref{Fourier_Im_Eq} we have $N/m$ independent equations.

The functions $v(\gamma, \phi, \Pi)$ and $L(\gamma, \phi)$ are defined for $\gamma$ satisfying $\sum \gamma_j = 0$ with sufficiently small $\vert \gamma \vert$. Extend them to a neighborhood of zero by 
$$
v(\gamma, \phi, \Pi) = v\left(\gamma - \frac{\B{1}}{m}\sum_j \gamma_j, \phi, \Pi\right),
\quad
L(\gamma, \phi) = L\left(\gamma- \frac{\B{1}}{m}\sum_j \gamma_j, \phi\right),
$$
where $\B{1} = (1, \ldots, 1)^T$.
Introduce
$$
A(\gamma, \phi) = i\phi I - L(\gamma, \phi),
$$
$$
\hat{\epsilon}(\gamma, \phi, \Pi) = A(\gamma, \phi) v(\gamma, \phi, \Pi),
$$

\begin{lemma}\label{th:lemma3}
Let $\Pi$ be a local mapping, $X$ be a mesh, $h_{av}$ and $\gamma$ be its average step and structure, $\alpha \in \mathbb{Z}$, $t \ge 0$. Then there holds
\begin{equation}
\|\epsilon_X(\exp(i \alpha x), \Pi)\|_{av} = \frac{1}{\sqrt{m}\, h_{av}} \|\hat{\epsilon}(\gamma, \alpha h_{av}, \Pi)\|.
\label{eq_fourier_eps}
\end{equation}
\end{lemma}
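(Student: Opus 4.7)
The plan is to identify the mesh function $\Pi_X f$ with $f(x)=\exp(i\alpha x)$ as an element of the invariant subspace $\mathcal{S}_\phi$ with $\phi=\alpha h_{av}$, and then rewrite the truncation error in block form.

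First I would compute $(\Pi_X f)_j$ for $f(x)=\exp(i\alpha x)$ directly from the definition. Writing $\exp(i\alpha x)=\exp(i\phi x/h_{av})$ with $\phi=\alpha h_{av}$ and using that $\mu_j^{(X)}$ is $m$-periodic in $j$, a short calculation shows the Bloch-type identity $(\Pi_X f)_{j+m}=\exp(i\phi m)(\Pi_X f)_j$, while for $j=0,\dots,m-1$ the values coincide with $v(\gamma,\phi,\Pi)_j$. Setting $U_\eta=(\mathcal{B}_m \Pi_X f)_\eta$, this gives $U_\eta=\exp(i\phi m\eta)\,v(\gamma,\phi,\Pi)$, i.e. $U\in\mathcal{S}_\phi$.

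Next I would rewrite the truncation error. Using the scaling property \eqref{eq_invalpha} with factor $h_{av}$, the definition of $\epsilon_X$ becomes
$$
h_{av}(\epsilon_X(f,\Pi))_j=-i\phi(\Pi_X f)_j+(\mathcal{L}(\gamma)\Pi_X f)_j.
$$
Applying $\mathcal{B}_m$ and using \eqref{eq_m} together with the formula \eqref{eq_def_L_gamma_phi} for the restriction of $\mathcal{L}_m(\gamma)$ to $\mathcal{S}_\phi$, each block satisfies
$$
h_{av}(\mathcal{B}_m \epsilon_X(f,\Pi))_\eta=\exp(i\phi m\eta)\bigl[-i\phi I+L(\gamma,\phi)\bigr]v(\gamma,\phi,\Pi)=-\exp(i\phi m\eta)\,\hat{\epsilon}(\gamma,\phi,\Pi).
$$
In particular $\|(\mathcal{B}_m\epsilon_X(f,\Pi))_\eta\|=h_{av}^{-1}\|\hat{\epsilon}(\gamma,\phi,\Pi)\|$ is independent of $\eta$.

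Finally I would compute the norm directly. Since the $N/m$ blocks all have identical Euclidean norm,
$$
\|\epsilon_X(f,\Pi)\|_{av}^2=\frac{1}{N}\sum_{\eta=0}^{N/m-1}\|(\mathcal{B}_m\epsilon_X(f,\Pi))_\eta\|^2=\frac{1}{N}\cdot\frac{N}{m}\cdot\frac{1}{h_{av}^2}\|\hat{\epsilon}(\gamma,\phi,\Pi)\|^2,
$$
which yields \eqref{eq_fourier_eps} after taking square roots. The only real subtlety is verifying the Bloch relation $(\Pi_X f)_{j+m}=\exp(i\phi m)(\Pi_X f)_j$ and confirming that the block-$0$ values agree with $v(\gamma,\phi,\Pi)$; this requires keeping careful track of the shift convention $(\cdot+j)h_{av}$ in the definition of the local mapping. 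Once that is in place, everything else is algebra and a one-line $L^2$ computation that bypasses Parseval altogether.
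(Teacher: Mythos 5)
Your proposal is correct and follows essentially the same route as the paper: identify $\Pi_X e^{i\alpha x}$ as a Bloch mode in $\mathcal{S}_\phi$ with zeroth block $v(\gamma,\phi,\Pi)$, observe that the truncation error then lies in $\mathcal{S}_\phi$ with zeroth block $\pm h_{av}^{-1}\hat{\epsilon}(\gamma,\phi,\Pi)$, and pass to norms (your explicit sum over the $N/m$ equal-norm blocks is exactly what the Parseval identity \eqref{eq_Parseval} gives for a single mode, so the two finishes coincide). Your sign $-\hat{\epsilon}$ in the block identity is in fact the correct one, but it is immaterial since only the norm enters \eqref{eq_fourier_eps}.
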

\begin{proof}
Denote $\phi  = \alpha h_{av}$. Since
$$
\Pi_X e^{i\alpha x} \in S_{\phi}, \quad (\mathcal{B}_m \Pi_X e^{i\alpha x})_0 = v(\gamma,\alpha h_{av},\Pi),
$$
we have $\epsilon_X (\exp (i\alpha x),\Pi) \in S_{\phi}$ and
$$
(\mathcal{B}_m \epsilon_X (e^{i\alpha x},\Pi))_0= \frac{1}{h_{av}}A(\gamma,\alpha h_{av})v(\gamma,\alpha h_{av},\Pi) = \frac{1}{h_{av}}  \hat{\epsilon}(\gamma, \alpha h_{av}, \Pi).
$$
It remains to use \eqref{eq_Parseval}.
\end{proof}

\begin{lemma}\label{th:L:ev}
Let $S_m(\phi)$ be the matrix with the elements
\begin{equation}
(S_m(\phi))_{jk} = m^{-1/2} \exp(2\pi i j k/m + i\phi j), \quad j, k = 0, \ldots, m-1.
\label{eq_def_Smphi}
\end{equation}
Then $S_m(\phi)$ is unitary and
\begin{equation}
L(0,\phi) = S_m(\phi)\ \mathrm{diag}\{\mathring{\lambda}(\phi + 2\pi l/m),\ l=0,\ldots,m-1\}\ S_m^{-1}(\phi).\label{eq_Lev_toprove}
\end{equation}
\end{lemma}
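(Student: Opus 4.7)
The plan is to prove the two assertions separately: first that $S_m(\phi)$ is unitary, then that its columns diagonalize $L(0,\phi)$ with the claimed eigenvalues. The conjugating matrix $S_m(\phi)$ suggests its own meaning: its $l$-th column, written as a vector in $\mathbb{C}^m$ indexed by $j$, is $m^{-1/2}e^{i\phi j}e^{2\pi ijl/m}$, which is (up to normalization) the initial block $(w_0, \ldots, w_{m-1})$ of the Bloch-wave sequence $w_j = \exp\bigl(i(\phi+2\pi l/m)j\bigr)$. So morally we are just expressing the fact that on a uniform mesh the $m$ Bloch waves $w(\phi+2\pi l/m)$, $l=0,\ldots,m-1$, all lie in $\mathcal{S}_\phi$ (since the extra factor $e^{2\pi il\eta}=1$ for integer $\eta$) and are eigenvectors of the translation-invariant operator $\mathring{\mathcal{L}}$ with eigenvalues $\mathring{\lambda}(\phi+2\pi l/m)$.

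For unitarity, I would simply compute
\[
\bigl(S_m^*(\phi)S_m(\phi)\bigr)_{kl} = \frac{1}{m}\sum_{j=0}^{m-1} e^{-i\phi j-2\pi ijk/m}\,e^{i\phi j+2\pi ijl/m} = \frac{1}{m}\sum_{j=0}^{m-1} e^{2\pi ij(l-k)/m},
\]
and invoke the standard orthogonality of the characters of $\mathbb{Z}/m\mathbb{Z}$: the sum equals $\delta_{kl}$ for $k,l\in\{0,\ldots,m-1\}$. The factor $e^{i\phi j}$ cancels entirely and the $\phi$-dependence of $S_m$ plays no role in unitarity.

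For the diagonalization I would proceed by a direct computation. Specializing the definitions to $\gamma=0$ gives $(L_\zeta(0))_{jk}=\mathring{a}_{\zeta m+k-j}$, whence
\[
(L(0,\phi))_{jk} = \sum_{\zeta} \mathring{a}_{\zeta m+k-j}\,e^{i\phi m\zeta}.
\]
Multiplying by $S_m(\phi)$ and performing the change of summation variable $n=\zeta m+k-j$ (a bijection between $\{0,\ldots,m-1\}\times\mathbb{Z}$ and $\mathbb{Z}$), the identities $e^{i\phi m\zeta+i\phi k}=e^{i\phi(n+j)}$ and $e^{2\pi ikl/m}=e^{2\pi i(n+j)l/m}\,e^{-2\pi i\zeta l}=e^{2\pi i(n+j)l/m}$ (the last factor is $1$ because $\zeta,l\in\mathbb{Z}$) let the sum decouple, yielding
\[
(L(0,\phi)S_m(\phi))_{jl} = (S_m(\phi))_{jl}\,\mathring{\lambda}(\phi+2\pi l/m),
\]
which is exactly \eqref{eq_Lev_toprove}.

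The only real obstacle is bookkeeping in the change of variable: verifying that the residual phase $e^{-2\pi i\zeta l}$ is trivial and that the separation into a $(j,l)$-dependent prefactor times a pure sum in $n$ really works out cleanly. Everything else is either an orthogonality computation for characters or a direct substitution.
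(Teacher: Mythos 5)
Your proof is correct and follows essentially the same route as the paper: unitarity via character orthogonality, and the diagonalization by the same direct computation matching $\mathring{a}_{\zeta m+k-j}$ against the Fourier expansion of $\mathring{\lambda}$ (you verify $L(0,\phi)S_m(\phi)=S_m(\phi)\,\mathrm{diag}\{\cdots\}$ column-wise, the paper expands $S_m(\phi)\,\mathrm{diag}\{\cdots\}\,S_m^{-1}(\phi)$ entrywise --- the same calculation read in the other direction). Your Bloch-wave interpretation also mirrors the paper's alternative ``no explicit calculation'' argument via the eigenvectors $w(\phi+2\pi l/m)$ of $\mathcal{L}(0)$ restricted to $\tilde S_\phi$.
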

\begin{proof}
The proof that $S_m(\phi)$ is unitary is straightforward.

Denote by $Y(\phi)$ the right-hand side of \eqref{eq_Lev_toprove}.
Using the expression \eqref{eq_lambda_phi} for $\mathring{\lambda}(\phi)$ we get
\begin{equation*}
\begin{gathered}
Y_{jk}(\phi) =
\sum\limits_{p=-S}^S \mathring{a}_p \exp(i\phi(j+p-k)) \left[\frac{1}{m} \sum\limits_{l=0}^{m-1} \exp\left(2\pi i l\frac{j+p-k}{m}\right)\right] =
\\
= \sum\limits_{\zeta \in \mathbb{Z}} \mathring{a}_{k-j+\zeta m} \exp(i\phi m \zeta).
\end{gathered}
\end{equation*}
Now it is easy to see that $Y_{jk}(\phi)$ coincides with $(L(0,\phi))_{jk}$ defined by \eqref{eq_def_L_gamma_phi}.

There is another way to see \eqref{eq_Lev_toprove} without explicit calculations. Let $\tilde S_\phi\subset \mathrm{seq}(\mathbb{Z},\mathbb{C})$ be the space of sequences such that $u_{k+m\eta} = e^{im\eta \phi} u_k$ and its basis be formed by the sequences $E_k(\phi)$, $k=0,\ldots,m-1$, such that $(E_k(\phi))_j=\delta_{kj}$, $j=0,\ldots,m-1$. The matrix $L(0,\phi)$ is the matrix of the restriction of the operator $\mathcal{L}(0)$ to the space $\tilde S_\phi$ in the basis $E_k(\phi)$. The sequences $m^{-1/2} w(\phi + 2\pi k/m)$, $k=0,\ldots,m-1$ belong to $\tilde S_\phi$ and are eigenvectors of $\mathcal{L}(0)$ with eigenvalues 
 $\mathring{\lambda}(\phi + 2\pi k/m)$. In the basis $E_k(\phi)$ these vectors have coordinates which are written in the columns of the matrix $S_m(\phi)$.
\end{proof}

Denote by $\mathbb{C}^{n \times n}$ the space of complex-valued matices of size $n \times n$, and by $\mathcal{A}(\mathbb{C}^d, \mathbb{C}^{n \times n})$ the space of functions from $\mathbb{C}^d$ to $\mathbb{C}^{n \times n}$ holomorphic at zero.
By $\sigma(A)$ we denote the spectrum of $A$.

We need the following simple result of the theory of perturbations \cite{Kato1966, Baumgartel1985}.

\begin{lemma}
Let $A \in \mathcal{A}(\mathbb{C}^d, \mathbb{C}^{n \times n})$. Let $\sigma(A(0)) = G \cup H$, \mbox{$G \cap H = \emptyset$}.
Then in a neighborhood of $\B{x}=0$ there holds
\mbox{$A(\B{x}) = S(\B{x})M(\B{x})S^{-1}(\B{x})$}, where
$S, M, S^{-1} \in \mathcal{A}(\mathbb{C}^d, \mathbb{C}^{n \times n})$,
\begin{equation}
M(\B{x}) = \left(\begin{array}{cc} M^{(G)}(\B{x}) & 0 \\ 0 & M^{(H)}(\B{x}) \end{array}\right),
\label{eq_rep}
\end{equation}
$\sigma(M^{(G)}(0)) = G$, $\sigma(M^{(H)}(0)) = H$, and the size $n_G$ of the matrix $M^{(G)}(\B{x})$ equals to the sum of orders of eigenvalues in $G$.
\label{th:mainmatrix}
\end{lemma}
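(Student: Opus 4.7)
The plan is to use the Riesz spectral projection, adapted to a holomorphic parameter. Let me describe the route.

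First I would choose a simple closed rectifiable contour $\Gamma \subset \mathbb{C}$ enclosing every point of $G$ in its interior and every point of $H$ in its exterior; this is possible because $G$ and $H$ are disjoint finite sets. Since $\Gamma$ is compact and $\sigma(A(0)) \cap \Gamma = \emptyset$, the resolvent $(\zeta I - A(0))^{-1}$ is bounded on $\Gamma$. By continuity of the determinant and the entries of $A(\mathbf{x})$, there is a neighborhood $U_0$ of $\mathbf{x} = 0$ in $\mathbb{C}^d$ on which $\zeta I - A(\mathbf{x})$ is invertible for every $\zeta \in \Gamma$, and the resolvent is jointly holomorphic in $(\zeta, \mathbf{x})$. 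Then the Riesz projection
\begin{equation*}
P(\mathbf{x}) = \frac{1}{2\pi i} \oint_\Gamma \bigl(\zeta I - A(\mathbf{x})\bigr)^{-1} d\zeta
\end{equation*}
is holomorphic in $\mathbf{x}$ on $U_0$. Standard resolvent-calculus identities (using two nested contours and the resolvent equation) give $P(\mathbf{x})^2 = P(\mathbf{x})$ and $A(\mathbf{x}) P(\mathbf{x}) = P(\mathbf{x}) A(\mathbf{x})$, and $P(0)$ has rank $n_G$ equal to the sum of algebraic multiplicities of eigenvalues in $G$.

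Next I would build a holomorphic intertwiner between $P(\mathbf{x})$ and the fixed projection $P_0 := P(0)$ by the classical Sz.-Nagy / Kato formula
\begin{equation*}
S(\mathbf{x}) = P(\mathbf{x}) P_0 + (I - P(\mathbf{x}))(I - P_0).
\end{equation*}
Clearly $S(0) = I$, so $S(\mathbf{x})$ is invertible in a smaller neighborhood $U_1 \subset U_0$ and $S, S^{-1}$ are holomorphic there. A direct expansion gives $S(\mathbf{x}) P_0 = P(\mathbf{x}) S(\mathbf{x})$, i.e. $S^{-1}(\mathbf{x}) P(\mathbf{x}) S(\mathbf{x}) = P_0$. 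Setting $M(\mathbf{x}) = S^{-1}(\mathbf{x}) A(\mathbf{x}) S(\mathbf{x})$ and using $A P = P A$, we get $M(\mathbf{x}) P_0 = P_0 M(\mathbf{x})$. After composing $S$ with a constant similarity that brings $P_0$ to the canonical form $\mathrm{diag}(I_{n_G}, 0)$, the commutation with $P_0$ forces $M(\mathbf{x})$ to be block-diagonal as in \eqref{eq_rep}, with the upper block of size $n_G$ and the lower of size $n - n_G$.

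It remains to identify the spectra of the blocks. By construction $M(0) = S^{-1}(0) A(0) S(0)$ is similar to $A(0)$, and $P_0$ is the spectral projection of $A(0)$ onto the generalized eigenspaces associated with $G$. Hence $M^{(G)}(0)$, the restriction of $M(0)$ to $\mathrm{Im}\, P_0$, has spectrum $G$, and $M^{(H)}(0)$, its restriction to $\mathrm{Ker}\, P_0$, has spectrum $H$. This gives all the claims.

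The main obstacle is conceptual rather than computational: verifying that $P(\mathbf{x})$ is genuinely holomorphic in the multi-variable parameter $\mathbf{x} \in \mathbb{C}^d$ and that the contour $\Gamma$ can be kept fixed while $\mathbf{x}$ varies. Both reduce to a uniform-in-$\zeta \in \Gamma$ lower bound on $|\det(\zeta I - A(\mathbf{x}))|$ near $\mathbf{x}=0$, which follows from continuity on the compact set $\Gamma \times \{0\}$ and a standard shrinking-neighborhood argument. Once this uniform estimate is in hand, holomorphy of $P$, $S$, $S^{-1}$, and $M$ is automatic from the explicit formulas.
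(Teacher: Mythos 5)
Your argument is correct and complete. Note that the paper itself does not prove Lemma~\ref{th:mainmatrix}: it only remarks that this is a standard perturbation-theory fact \cite{Kato1966,Baumgartel1985} and that a proof ``in exactly this form'' appears in \cite{Bakhvalov2019beng}. What you have written out is precisely that classical route --- the Riesz spectral projection $P(\B{x})$ over a fixed separating contour, holomorphy of $P$ via the joint holomorphy of the resolvent, the Sz.-Nagy/Kato intertwiner $S(\B{x}) = P(\B{x})P_0 + (I-P(\B{x}))(I-P_0)$ with $S(0)=I$, and the block-diagonalization forced by $M(\B{x})P_0 = P_0 M(\B{x})$. The identity $S(\B{x})P_0 = P(\B{x})S(\B{x})$ does follow by the direct expansion you indicate, and invertibility of $S$ near $\B{x}=0$ from $S(0)=I$ spares you the usual $(I-(P-P_0)^2)^{-1/2}$ normalization. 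Since the rank of $P(0)$ equals the sum of the algebraic multiplicities of the eigenvalues in $G$, the size claim for $M^{(G)}$ also comes out correctly. In short: the proposal supplies, correctly, the standard proof that the paper outsources to its reference.
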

The proof of Lemma~\ref{th:mainmatrix} in exactly this form is given in \cite{Bakhvalov2019beng}.

The function $L(\gamma, \phi)$ is a holomorphic function of $m+1$ variables ($\gamma_0, \ldots, \gamma_{m-1}$, $\phi$) at each point $(0, \phi)$, $\phi \in \mathbb{R}$. In particluar, it is holomorphic at $(0,0)$. Under Assumption~\ref{ass:2}, by Lemma~\ref{th:L:ev} the matrix $L(0,0)$ has zero eigenvalue of order 1. By Lemma~\ref{th:mainmatrix}, in a neighborhood of zero there holds
\begin{equation}
L(\gamma, \phi) = \tilde{S}(\gamma, \phi) \left(\begin{array}{cc}  \lambda_0(\gamma, \phi) & 0 \\ 0 & \tilde{M}^{(*)}(\gamma, \phi)  \end{array}\right) \tilde{S}^{-1}(\gamma, \phi),
\label{eq_matrix_decomp0}
\end{equation}
where the matrices $\tilde{S}$, $\tilde{M}^{(*)}$, $\tilde{S}^{-1}$ and the scalar function $\lambda_0$ are holomorphic at zero, and $\tilde{M}^{(*)}(0,0)$ is nondegenerate. Obviously, in a (generally smaller) neighborhood of $\phi=0$ there holds $\lambda_0(0,\phi) = \mathring{\lambda}(\phi)$. Let $S_m(\phi)$ be the unitary matrix defined by \eqref{eq_def_Smphi}. Both matrices $\tilde{S}(0,\phi)$ and $S_m(\phi)$ transform $L(0,\phi)$ to the block-diagonal form, so there holds
$$
\tilde{S}(0,\phi) = S_m(\phi) \left(\begin{array}{cc}  a(\phi) & 0 \\ 0 & T(\phi)  \end{array}\right)
$$
with some $a(\phi) \in \mathbb{C} \setminus \{0\}$ and a nondegenerate matrix $T(\phi)$.
Introducing $S(\gamma, \phi) = \tilde{S}(\gamma, \phi)  \tilde{S}^{-1}(0,\phi) S_m(\phi)$ we come to the decomposition
\begin{equation}
L(\gamma, \phi) = S(\gamma, \phi) \left(\begin{array}{cc}  \lambda_0(\gamma, \phi) & 0 \\ 0 & M^{(*)}(\gamma, \phi)  \end{array}\right) S^{-1}(\gamma, \phi).
\label{eq_matrix_decomp}
\end{equation}
We have $S(0,\phi) = S_m(\phi)$ and hence for each $\tilde{K}>1$ there exists a neighborhood of zero such that
\begin{equation}
\|S(\gamma, \phi)\| \le \tilde{K}, \quad \|S^{-1}(\gamma, \phi)\| \le \tilde{K}.
\label{eq_matrix_decomp_s}
\end{equation}
Besides, $M^{(*)}(0, \phi)$ is a diagonal matrix.

\begin{lemma}\label{th:expsum}
Let $B$ and $D$ be square matrices. Let $D$ be a normal matrix, and its eigenvalues $d_j$ satisfy $\mathrm{Re}\,d_j \le \mu$. Then
$$
\|\exp(B+D)\| \le \exp(\mu+\|B\|).
$$
\end{lemma}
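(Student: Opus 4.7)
The plan is to combine the Lie--Trotter product formula with the following two elementary bounds: (i) for any square matrix $B$ one has $\|\exp(B)\| \le \exp(\|B\|)$, which is immediate from the power series and the submultiplicativity of the operator norm; (ii) for a normal matrix $D$ with eigenvalues $d_j$, $\exp(D)$ is also normal, so $\|\exp(D)\|$ equals its spectral radius, namely $\max_j |\exp(d_j)| = \exp(\max_j \mathrm{Re}\, d_j) \le \exp(\mu)$.

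First I would recall the Lie--Trotter formula
\begin{equation*}
\exp(B+D) = \lim_{n \to \infty} \bigl(\exp(B/n)\,\exp(D/n)\bigr)^n,
\end{equation*}
which holds for arbitrary square matrices. Next, applying bounds (i) and (ii) to the rescaled matrices $B/n$ and $D/n$ (note that $D/n$ is still normal with eigenvalues $d_j/n$ of real part bounded by $\mu/n$), I get
\begin{equation*}
\|\exp(B/n)\,\exp(D/n)\| \le \|\exp(B/n)\|\cdot\|\exp(D/n)\| \le \exp(\|B\|/n)\exp(\mu/n).
\end{equation*}
Raising to the $n$-th power via submultiplicativity yields
\begin{equation*}
\bigl\|\bigl(\exp(B/n)\exp(D/n)\bigr)^n\bigr\| \le \exp(\mu + \|B\|),
\end{equation*}
a bound independent of $n$. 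Passing to the limit and using continuity of the norm closes the argument.

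There is no serious obstacle here. The only care needed is to justify invoking bound (ii) in the non-Hermitian (but normal) case, which follows because normality is preserved under holomorphic functional calculus, so $\exp(D)$ is normal and its operator norm coincides with its spectral radius. If one wished to avoid Lie--Trotter, an alternative is to unitarily diagonalize $D = U\mathrm{diag}(d_j)U^*$ and estimate $\exp(B+D)$ via the Duhamel/variation-of-constants formula $\exp(B+D) = \exp(D) + \int_0^1 \exp((1-s)(B+D))\,B\,\exp(sD)\,ds$ together with a Gronwall-type iteration, but the Lie--Trotter route is shorter and cleaner.
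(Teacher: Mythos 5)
Your proposal is correct and follows essentially the same route as the paper: the Lie--Trotter product formula combined with $\|\exp(B/n)\|\le\exp(\|B\|/n)$ and the fact that the normal matrix $D/n$ is unitarily diagonalizable, so $\|\exp(D/n)\|=\exp(\max_j\mathrm{Re}\,d_j/n)\le\exp(\mu/n)$. No substantive difference from the paper's argument.
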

\begin{proof}
By the Lie formula,
$$
\exp(B+D) = \lim\limits_{n \rightarrow \infty} \left(\exp(B/n) \exp(D/n)\right)^n.
$$
Hence
$$
\|\exp(B+D)\|  \le
\lim\limits_{n \rightarrow \infty} \left\| \exp(B/n)\right\|^n\
\lim\limits_{n \rightarrow \infty} \left\| \exp(D/n)\right\|^n.
$$
Since $\|\exp(B/n)\| \le \exp(\|B\|/n)$, the first multiplier in the right-hand side does not exceed $\exp(\|B\|)$. Since the matrix $D/n$ is normal it can be transformed to a diagonal matrix by a unitary transform. Thus, $\|\exp(D/n)\|$ equals the exponent of the maximal real part of the eigenvalues of $D/n$. This yields the statement of the lemma.
\end{proof}


By Assumption~\ref{ass:2}, the function $\mathrm{Re}\mathring{\lambda}(\phi)$ is a bijection
\begin{equation}
\mathrm{Re}\mathring{\lambda}(\phi)\ :\ [0, \phi_{\max}] \rightarrow [0, x_{\max}]
\label{eq_def_phimax}
\end{equation}
for some $\phi_{\max}>0$ and $x_{\max}>0$ such that
\begin{equation}
\mathrm{Re}\mathring{\lambda}(\phi) \ge x_{\max}, \quad \phi_{\max} \le \phi \le 2\pi - \phi_{\max}.
\label{eq_def_phimax_1}
\end{equation}

\begin{lemma}
\label{th:aux:9}
Let $\mathrm{Re}\,\lambda_0(\gamma, \phi) \ge 0$ hold in a neighborhood of $(0,0)$. Then for each $K > 1$ there exists $\gamma_{\max}>0$ such that for each $\phi \in \mathbb{R}$, \mbox{$|\gamma| \le \gamma_{\max}$}, and $\nu \ge 0$ there holds
\begin{equation}
\|\exp(-\nu L(\gamma, \phi))\| \le K. \label{aux:th9:1}
\end{equation}
\end{lemma}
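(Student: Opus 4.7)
By the $2\pi/m$-periodicity of $L(\gamma,\cdot)$, it suffices to bound $\|\exp(-\nu L(\gamma,\phi))\|$ for $\phi$ in a single period, which I take as $[-\pi/m,\pi/m]$. The plan is to split this interval into a near-critical part $|\phi|\le\varepsilon$ (where one eigenvalue of $L(0,\phi)$ approaches zero as $\phi\to 0$) and an away part $\varepsilon\le|\phi|\le\pi/m$ (where every eigenvalue stays strictly dissipative), with $\varepsilon>0$ chosen small enough that the decomposition \eqref{eq_matrix_decomp} is valid on $\{|\gamma|\le\gamma_1\}\times\{|\phi|\le\varepsilon\}$ for some $\gamma_1>0$.

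In the near-critical region I would use \eqref{eq_matrix_decomp} to write
\begin{equation*}
\exp(-\nu L(\gamma,\phi))=S(\gamma,\phi)\begin{pmatrix}e^{-\nu\lambda_0(\gamma,\phi)}&0\\0&e^{-\nu M^{(*)}(\gamma,\phi)}\end{pmatrix}S^{-1}(\gamma,\phi).
\end{equation*}
The scalar block is bounded by $1$ since $\mathrm{Re}\,\lambda_0\ge 0$ by hypothesis. For the matrix block, comparing \eqref{eq_matrix_decomp} at $\gamma=0$ with Lemma~\ref{th:L:ev} forces $M^{(*)}(0,\phi)=\mathrm{diag}\{\mathring{\lambda}(\phi+2\pi l/m):l=1,\ldots,m-1\}$, which is a normal matrix whose eigenvalues have real parts bounded below by some $c_1>0$ on $|\phi|\le\varepsilon$ (by Assumption~\ref{ass:2} applied at $\phi=2\pi l/m$, shrinking $\varepsilon$ if needed). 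Writing $M^{(*)}(\gamma,\phi)=M^{(*)}(0,\phi)+R(\gamma,\phi)$ with $\|R\|\to 0$ as $\gamma\to 0$ uniformly in $\phi$, Lemma~\ref{th:expsum} yields $\|e^{-\nu M^{(*)}(\gamma,\phi)}\|\le e^{\nu(-c_1+\|R\|)}\le 1$ once $|\gamma|$ is small enough. Combining this with \eqref{eq_matrix_decomp_s} (in which I pick $\tilde{K}$ with $\tilde{K}^2\le K$) gives $\|\exp(-\nu L(\gamma,\phi))\|\le K$.

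For the away region the argument is more direct: compactness of $\{\varepsilon\le|\phi|\le\pi/m\}$ together with Assumption~\ref{ass:2} yields $\min_l\mathrm{Re}\,\mathring{\lambda}(\phi+2\pi l/m)\ge c_2>0$ uniformly. By Lemma~\ref{th:L:ev} the matrix $L(0,\phi)$ is unitarily similar to a diagonal one, hence normal. Writing $L(\gamma,\phi)=L(0,\phi)+\widetilde R(\gamma,\phi)$ with $\|\widetilde R\|$ uniformly small for $\gamma$ small and applying Lemma~\ref{th:expsum} directly, I obtain $\|\exp(-\nu L(\gamma,\phi))\|\le e^{\nu(-c_2+\|\widetilde R\|)}\le 1\le K$ for $|\gamma|$ sufficiently small.

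The main obstacle is the near-critical region. Treating $L(\gamma,\phi)$ there as a single perturbation of the normal matrix $L(0,\phi)$ would give a bound of the form $\exp(\nu(-\mathrm{Re}\,\mathring{\lambda}(\phi)+\|R\|))$, which blows up in $\nu$ for $\phi$ close to $0$ as soon as $\|R\|>0$. The spectral splitting \eqref{eq_matrix_decomp} is exactly what isolates this problematic eigenvalue into the scalar block (controlled by the hypothesis on $\lambda_0$) while leaving the complementary block uniformly strictly dissipative and, crucially, normal at $\gamma=0$, so that Lemma~\ref{th:expsum} can be brought to bear on it.
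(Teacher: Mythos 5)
Your proof is correct and follows essentially the same route as the paper's: split $\phi$ into a near-zero region, where the decomposition \eqref{eq_matrix_decomp} isolates $\lambda_0$ (bounded by hypothesis) from the strictly dissipative block $M^{(*)}$ (handled by Lemma~\ref{th:expsum} as a perturbation of the normal matrix $M^{(*)}(0,\phi)$), and an away region, where $L(\gamma,\phi)$ is treated directly as a perturbation of the normal matrix $L(0,\phi)$, finishing by $2\pi/m$-periodicity. Your explicit identification of $M^{(*)}(0,\phi)$ with $\mathrm{diag}\{\mathring{\lambda}(\phi+2\pi l/m),\ l=1,\ldots,m-1\}$ and the choice of $\varepsilon$ small enough that these eigenvalues stay uniformly dissipative is, if anything, slightly more careful than the paper's wording.
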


If $\lambda_0(0,\phi) \equiv \mathring{\lambda}(\phi)$ is a contour without  self-intersections and $\inf \vert \mathring{\lambda}'(\phi)\vert \ne 0$, this lemma is obvious. We give a proof for a general case.

\begin{proof}
Let $\phi_{\max}$ and $x_{\max}$ be defined by \eqref{eq_def_phimax}--\eqref{eq_def_phimax_1}. The decomposition \eqref{eq_matrix_decomp}, inequalities \eqref{eq_matrix_decomp_s} with $\tilde{K} = K^{1/2}$, and the inequality $\mathrm{Re}\,\lambda_0(\gamma, \phi) \ge 0$ hold in a parallelepiped of the form
$$
\left\{(\gamma, \phi)\ :\ \gamma \in [-\tilde{\gamma}, \tilde{\gamma}]^m, \quad \phi \in [-\tilde{\phi}, \tilde{\phi}]\right\}
$$
with some $\tilde{\gamma}, \tilde{\phi} > 0$.
Without loss, $\tilde{\phi} \le \phi_{\max}$. Put \mbox{$\tilde{x} = \mathrm{Re}\mathring{\lambda}(\tilde{\phi})$}.

First consider the case $|\phi| \le \tilde{\phi}$. From
\eqref{eq_matrix_decomp} we have
$$
\|\exp(-\nu L(\gamma, \phi))\| \le \tilde{K}^2 \max\{|\exp(-\nu \lambda_0(\gamma, \phi))|, \|\exp(-\nu M^*(\gamma, \phi))\|\}.
$$
The first argument of the maximum is not greater than one by assumption. To estimate the second argument, use Lemma~\ref{th:expsum} with $D = -\nu M^{(*)}(0, \phi)$ and \mbox{$B = -\nu(M^{(*)}(\gamma, \phi)-M^{(*)}(0, \phi))$}. All eigenvalues of $M^{(*)}(0, \phi)$ have the real part greater than or equal to $x_{\max} > 0$. Then there exists $\hat{\gamma}_{\max}>0$ such that for $|\gamma| \le \hat{\gamma}_{\max}$ there holds $\|B\| \le \nu x_{\max}$, so the second argument of the maximum is not greater than unit either. Thus we get \eqref{aux:th9:1}.

Now consider the case $|\phi| > \tilde{\phi}$, $|\phi| \le \pi/m$. The real part of an eigenvalue of $L(0,\phi)$ is greater than or equal to $\tilde{x}$. Denote \mbox{$D = -\nu L(0,\phi)$} ($D$ is a normal matrix) and $B = -\nu(L(\gamma, \phi) - L(0,\phi))$. Then there exists $\hat{\gamma}_{\max}>0$ such that for $|\gamma| \le \hat{\gamma}_{\max}$ there holds $\|B\| \le \nu \tilde{x}$.
By Lemma~\ref{th:expsum} we get \eqref{aux:th9:1}.

We proved \eqref{aux:th9:1} for $|\phi| \le \pi/m$. Since $L(\gamma, \phi)$ is $2\pi/m$-periodic with respect to $\phi$, then inequality \eqref{aux:th9:1} holds for each $\phi \in \mathbb{R}$.
\end{proof}


In Lemma~\ref{th:aux:9} we assume that $\mathrm{Re}\,\lambda_0(\gamma, \phi) \ge 0$ in a neighborhood of $(0,0)$. Now we investigate when this assumption holds.

Let $\varkappa$ be defined by \eqref{eq_ring_lambda}, and denote by $q+1$ the minimal power of $\phi$ in the Taylor expansion of $\lambda_0(\gamma, \phi)-i\phi$ at $(0,0)$. Recall that $\lambda_0(0,\phi) = \mathring{\lambda}(\phi)$, and $\mathring{\lambda}(\phi)$ is given by \eqref{eq_lambda_phi}.

\begin{lemma}
Let $q \ge \varkappa-1$. Then in a neighborhood of $(0,0)$ there holds $\mathrm{Re}\,\lambda_0(\gamma, \phi) \ge 0$.
\label{th:aux:07}
\end{lemma}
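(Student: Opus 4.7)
The plan is to combine the reality of the coefficients $a_k$, which forces a conjugation symmetry of $\lambda_0$ in $\phi$, with the hypothesis $q \ge \varkappa - 1$ and the sign information from Assumption~\ref{ass:2}. Because $L_\zeta(\gamma)$ has real entries for real $\gamma$, the representation \eqref{eq_def_L_gamma_phi} yields $L(\gamma, -\phi) = \overline{L(\gamma, \phi)}$ for real $\phi$. Since $\lambda_0(\gamma, \phi)$ is the unique holomorphic branch of $\sigma(L(\gamma, \phi))$ emanating from the simple eigenvalue $0$ of $L(0, 0)$ (simple by Lemma~\ref{th:L:ev} and Assumption~\ref{ass:2}, as already exploited in the decomposition \eqref{eq_matrix_decomp}), the conjugate $\overline{\lambda_0(\gamma, \phi)}$ is an eigenvalue of $L(\gamma, -\phi)$ that remains close to $0$ and isolated from the rest of the spectrum, so it must coincide with $\lambda_0(\gamma, -\phi)$. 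Thus, for real $(\gamma, \phi)$ near $(0, 0)$,
\begin{equation*}
\lambda_0(\gamma, -\phi) = \overline{\lambda_0(\gamma, \phi)}.
\end{equation*}

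Writing the Taylor expansion $\lambda_0(\gamma, \phi) = \sum_{\alpha, \beta} c_{\alpha, \beta}\, \gamma^\alpha \phi^\beta$, this symmetry forces $(-1)^\beta c_{\alpha, \beta} = \overline{c_{\alpha, \beta}}$, so $c_{\alpha, \beta}$ is real for even $\beta$ and purely imaginary for odd $\beta$. Consequently, for real $(\gamma, \phi)$, only even powers of $\phi$ contribute to $\mathrm{Re}\,\lambda_0(\gamma, \phi)$. The hypothesis $q \ge \varkappa - 1$ says $c_{\alpha, \beta} = 0$ for all $\alpha$ whenever $\beta < \varkappa$; since $\varkappa$ is odd, the smallest even $\beta$ with $\beta \ge \varkappa$ is $\varkappa + 1$, and we obtain
\begin{equation*}
\mathrm{Re}\,\lambda_0(\gamma, \phi) = \phi^{\varkappa + 1}\, g(\gamma) + O(\phi^{\varkappa + 3}),
\end{equation*}
where $g(\gamma) = \sum_\alpha c_{\alpha, \varkappa + 1}\, \gamma^\alpha$ is real-analytic in $\gamma$.

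Finally, $g(0)$ equals the coefficient of $\phi^{\varkappa + 1}$ in $\mathring{\lambda}(\phi)$, and since $\varkappa + 1$ is even while Assumption~\ref{ass:2} gives $\mathrm{Re}\,\mathring{\lambda}(\phi) > 0$ for small $\phi \ne 0$, this coefficient is strictly positive. By continuity, $g(\gamma) \ge g(0)/2 > 0$ for $|\gamma|$ sufficiently small, and the $O(\phi^{\varkappa + 3})$ remainder is dominated by $\phi^{\varkappa + 1}\, g(\gamma)$ for $\phi$ small (again using that $\varkappa + 1$ is even, so $\phi^{\varkappa + 1} \ge 0$). Hence $\mathrm{Re}\,\lambda_0(\gamma, \phi) \ge 0$ in a neighborhood of $(0, 0)$.

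The main delicacy is the conjugation identity in the first step: one must know that the conjugate of the distinguished branch $\lambda_0$ is again the same distinguished branch at $(\gamma, -\phi)$, not some other eigenvalue of $L(\gamma, -\phi)$. This rests on the simplicity and isolation of the zero eigenvalue of $L(0, 0)$, exactly what Lemma~\ref{th:L:ev} and Assumption~\ref{ass:2} secure, and it is the reason Lemma~\ref{th:mainmatrix} could isolate a single holomorphic branch in the first place. Without this symmetry, the odd-power coefficient at $\phi^\varkappa$ could have a nonzero real part for $\gamma \ne 0$, and since $\varkappa$ is odd this would make $\mathrm{Re}\,\lambda_0$ change sign with $\phi$, defeating the whole conclusion.
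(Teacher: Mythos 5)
Your proof is correct and follows essentially the same route as the paper's: both arguments use the realness of the matrices $L_\zeta(\gamma)$ together with the simplicity and isolation of the branch $\lambda_0$ to conclude that the odd-order Taylor coefficients in $\phi$ are purely imaginary, so that the leading contribution to $\mathrm{Re}\,\lambda_0(\gamma,\phi)$ is $c\phi^{\varkappa+1}$ with $c>0$. The only cosmetic difference is that you extract this parity structure from the conjugation symmetry $L(\gamma,-\phi)=\overline{L(\gamma,\phi)}$, whereas the paper evaluates $L$ at imaginary values of $\phi$, where it becomes a real matrix.
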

\begin{proof}


By assumption,
$$
\lambda_0(\gamma,\phi) = i\phi + c\phi^{\varkappa+1} + (i\phi)^{\varkappa} (c_1(\gamma) + c_2(\gamma) \phi) + O(|\phi|^{\varkappa+2}),
$$
where $c>0$ is the same as in \eqref{eq_ring_lambda}, $c_1 = O(|\gamma|)$, and $c_2 = O(|\gamma|)$. We need to prove that  $c_1(\gamma) \in \mathbb{R}$ in a neighborhood of $\gamma=0$, then the statement of the lemma will be obvious.

The matrix $L(\gamma, i\phi)$ is real-valued by construction. For a sufficiently small $|\gamma|$ and $|\phi|$, by \eqref{eq_matrix_decomp}, the only eigenvalue of $L(\gamma, i\phi)$ in a sufficiently small ball $B_{\epsilon}(0)$ is $\lambda_0(\gamma, i\phi)$. If \mbox{$\lambda_0(\gamma, i\phi) \not\in \mathbb{R}$}, then $L(\gamma, i\phi)$ would have another eigenvalue, $\bar{\lambda}_0(\gamma, i\phi)$, also belonging to this ball. Thus $\lambda_0(\gamma, i\phi) \in \mathbb{R}$. And hence, all derivatives of $\lambda_0(\gamma, i\phi)$ with respect to $\phi$ are also real-valued.
\end{proof}

It remains to specify, how to detect $q$ without the explicit use of $L(\gamma, \phi)$.

\begin{lemma}
Let $\Pi$ be a local mapping. Let the truncation error in the sense of $\Pi$ satisfy
\begin{equation}
\|\epsilon_X(f, \Pi)\|_{av} \le C(\gamma)\ (h_{av})^q |f|_{q+1},
\label{eq_aux111_000}
\end{equation}
for some $q$ and $C(\gamma)$, where $h_{av}$ and $\gamma$ are the average step and the structure of $X$. Then in a neighborhood of $(0,0)$ there holds
\begin{equation}
|\lambda_0(\gamma,\phi) - i\phi| \le 2 C(\gamma)\  |\phi|^{q+1},
\label{eq_aux111_a}
\end{equation}
with the same $C(\gamma)$.
\label{th:aux:111}
\end{lemma}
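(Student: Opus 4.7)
My plan is to translate the physical-space bound~\eqref{eq_aux111_000} into a Fourier-space bound on $\hat{\epsilon}$ via Lemma~\ref{th:lemma3}, and then extract $\lambda_0(\gamma,\phi)$ from the block-diagonal decomposition~\eqref{eq_matrix_decomp}.

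First, I apply~\eqref{eq_aux111_000} to $f(x) = \exp(i\alpha x)$, for which $|f|_{q+1} = |\alpha|^{q+1}$, and invoke the identity of Lemma~\ref{th:lemma3}:
$$
\|\hat{\epsilon}(\gamma, \alpha h_{av}, \Pi)\| \le \sqrt{m}\, C(\gamma)\, |\alpha h_{av}|^{q+1}.
$$
For a fixed mesh structure $\gamma$, every sufficiently small $h_{av} > 0$ is admissible, so taking $\alpha = 1$ lets $\phi = h_{av}$ range through a real interval accumulating at $0$. Since $\hat{\epsilon}(\gamma, \cdot, \Pi)$ is holomorphic at $\phi = 0$, matching the bound term-by-term in its Taylor series forces the first $q+1$ Taylor coefficients in $\phi$ to vanish and bounds the $(q+1)$-st coefficient in norm by $\sqrt{m}\, C(\gamma)$. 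Hence, in a neighborhood of $(0, 0)$,
$$
\|\hat{\epsilon}(\gamma, \phi, \Pi)\| \le \sqrt{m}\, C(\gamma)\, |\phi|^{q+1}\, (1 + o(1)).
$$

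Second, I use~\eqref{eq_matrix_decomp} to block-diagonalize $i\phi I - L(\gamma, \phi)$. Setting $\beta(\gamma, \phi) := S^{-1}(\gamma, \phi)\, v(\gamma, \phi, \Pi)$ and using $\hat{\epsilon} = (i\phi I - L)\, v$, I obtain
$$
S^{-1}(\gamma, \phi)\, \hat{\epsilon}(\gamma, \phi, \Pi) = \begin{pmatrix} i\phi - \lambda_0(\gamma, \phi) & 0 \\ 0 & i\phi I - M^{(*)}(\gamma, \phi) \end{pmatrix} \beta(\gamma, \phi).
$$
Reading off the first component yields $(i\phi - \lambda_0(\gamma,\phi))\, \beta_0(\gamma, \phi) = [S^{-1}(\gamma, \phi)\, \hat{\epsilon}(\gamma, \phi, \Pi)]_0$, and therefore
$$
|\lambda_0(\gamma, \phi) - i\phi| \le \frac{\|S^{-1}(\gamma, \phi)\|\, \|\hat{\epsilon}(\gamma, \phi, \Pi)\|}{|\beta_0(\gamma, \phi)|}.
$$

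Third, I compute the prefactor at $(0, 0)$. Taking $\alpha = 0$ in Lemma~\ref{th:lemma3} gives $\hat{\epsilon}(\gamma, 0, \Pi) = 0$, so $v(\gamma, 0, \Pi)$ lies in $\ker L(\gamma, 0)$; at $\gamma = 0$ this kernel is one-dimensional and spanned by $\B{1}$, which is also (up to $1/\sqrt{m}$) the first column of the unitary matrix $S(0, 0) = S_m(0)$. Consequently $v(0, 0, \Pi) = c\, \B{1}$ for some $c \neq 0$, $\|S^{-1}(0, 0)\| = 1$, and $\beta_0(0, 0) = c\sqrt{m}$. By continuity, restricting to a sufficiently small neighborhood of $(0, 0)$ absorbs the $(1 + o(1))$ factor together with the small deviations of $\|S^{-1}\|$ and $|\beta_0|$ from their values at $(0,0)$ into the constant $2$, producing~\eqref{eq_aux111_a}.

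The hardest step is the first one: lifting the bound on $\hat{\epsilon}$ from the discrete lattice $\phi \in h_{av}\mathbb{Z}$ of admissible Fourier modes to a uniform estimate on the analytic function $\hat{\epsilon}(\gamma, \cdot, \Pi)$ in a full complex neighborhood of $\phi = 0$. The freedom to rescale the mesh while keeping $\gamma$ fixed is what makes this possible; once this is in place, the perturbation-theoretic manipulation of the already available decomposition~\eqref{eq_matrix_decomp} is routine.
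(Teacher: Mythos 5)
Your proof follows essentially the same route as the paper's: pass to the Fourier-side bound $\|\hat{\epsilon}(\gamma,\phi,\Pi)\|\le\sqrt{m}\,C(\gamma)|\phi|^{q+1}$ via Lemma~\ref{th:lemma3}, apply the decomposition \eqref{eq_matrix_decomp}, and read off the first component using that the first entry of $S^{-1}(\gamma,\phi)v(\gamma,\phi,\Pi)$ is close to $\sqrt{m}$ near $(0,0)$; your extra care in lifting the bound from the discrete set of admissible $\phi=\alpha h_{av}$ to a full neighborhood by analyticity fills in a step the paper leaves implicit. The one soft spot is the normalization constant $c$ in $v(0,0,\Pi)=c\,\B{1}$: to land exactly on the factor $2$ you need $|c|=1$ (i.e.\ $\Pi_X 1=\B{1}$), which the paper simply asserts by stating $v\to(1,\ldots,1)^T$ and which holds for the mappings actually used, but which your kernel argument alone does not determine.
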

\begin{proof}
By \eqref{eq_fourier_eps}, in a neighborhood of $\phi=0$ there holds
$$
\|\hat{\epsilon}(\gamma, \phi, \Pi)\|
\le \sqrt{m} C(\gamma)\ |\phi|^{q+1}.
$$
Using the decomposition \eqref{eq_matrix_decomp}, write
\begin{equation*}
\begin{gathered}
S^{-1}(\gamma, \phi) \hat{\epsilon}(\gamma, \phi, \Pi) = \\
=\left(\begin{array}{cc}  i\phi - \lambda_0(\gamma, \phi)  & 0
\\
0 & i\phi I - M^{(*)}(\gamma, \phi)  \end{array}\right) S^{-1}(\gamma, \phi) v(\gamma, \phi, \Pi).
\end{gathered}
\end{equation*}

Since $v(\gamma, \phi, \Pi)$ tends to $(1,\ldots, 1)^T$ as $\phi \rightarrow 0$, and \mbox{$S^{-1}(\gamma, \phi) \rightarrow S_m(0)$} as $(\gamma, \phi) \rightarrow (0,0)$, then the first component of $S^{-1}(\gamma, \phi) v(\gamma, \phi, \Pi)$ tends to $\sqrt{m}$. On the other hand, the norm of the left-hand side in a neighborhood of zero does not exceed $\sqrt{2m}C(\gamma) |\phi|^{q+1}$. From here we have \eqref{eq_aux111_a}.
\end{proof}



Now we are ready to prove the general stability result. 
\begin{proof}[Proof of Theorem~\ref{th:stab}]
If $m=1$, then the mesh is uniform. For any $K$, put $\mu(1) = 1$. By Assumption~\ref{ass:2} we have $\|u(t)\|_{av} \le \|u(0)\|_{av}$.

Let $m \in \mathbb{N} \setminus \{1\}$. By Lemma~\ref{th:aux:111} there holds \eqref{eq_aux111_a}. The assumptions of Lemma~\ref{th:aux:07} are satisfied, so there holds $\mathrm{Re}\,\lambda_0(\gamma, \phi) \ge 0$ in a neighborhood of $(0,0)$. For each $K>1$, put $\mu(m) = \gamma_{\max}$ where $\gamma_{\max}$ is given by Lemma~\ref{th:aux:9}. Thus, for each $X \in \mathcal{F}_{\mu}$ there holds \eqref{aux:th9:1}. 

It remains to see that for each solution $u(t)$ of \eqref{eq1} the inequality \eqref{eq_9e1hio} follows from \eqref{aux:th9:1} and the Parseval identity \eqref{eq_Parseval}.
\end{proof}

In the notation of \cite{Bakhvalov2023} under the assumptions of Theorem~\ref{th:stab} and a fixed mesh structure, the scheme \eqref{eq1} is ``simple'' and possesses the long-time simulation order $q$.

\section{The schemes with the polynomial reconstruction}
\label{sect:poly}

In this section we prove an error estimate for the FV schemes with a polynomial reconstruction for  \eqref{eq_TE}.

Let $p = 2s$, $s \in \mathbb{N} \cup \{0\}$, be the order of the polynomial in use for the variable reconstruction. Let $X = \{x_j\}$ be a mesh. By $\hat{\Pi}$ we denote the integral mapping \eqref{map_integral}. The finite-volume scheme with the $p$-order polynomial reconstruction has the form
\begin{equation}
\frac{du_j}{dt} + \frac{p_{j}(x_{j+1}) - p_{j-1}(x_{j})}{h_{j+1/2}} = 0,
\label{eqFV1}
\end{equation}
\begin{equation}
u_j(0) = \frac{1}{h_{j+1/2}} \int\limits_{x_{j}}^{x_{j+1}} v_0(x) dx,
\label{eqFV2}
\end{equation}
where $p_j(x)$ is the $p$-th order polynomial such that
\begin{equation}
\frac{1}{h_{j+k+1/2}} \int\limits_{x_{j+k}}^{x_{j+k+1}} p_j(x) dx = u_{j+k}
\label{eqFV3}
\end{equation}
holds for each $k = -p/2, \ldots, p/2$. This system has $p+1$ equations and $p+1$ unknowns; its consistency is well-known (see \cite{Shu1997}).

On the uniform mesh $X = \{jh, j \in \mathbb{Z}\}$, the scheme \eqref{eqFV1}, \eqref{eqFV2}, \eqref{eqFV3} reduces to the finite-difference scheme of order $p+1$ of the form
\begin{equation}
\frac{du_j(t)}{dt} + \frac{1}{h}\sum\limits_{k=-s-1}^s \mathring{a}_{k} u_{j+k}(t) = 0
\label{eq1_uni_a}
\end{equation}
with the initial data $u(0) = \hat{\Pi}_X v_0$.
Behavior of this scheme is well known. In particular, the truncation error of this scheme has the representation (see Theorem~2 in \cite{Iserles1982})
\begin{equation}
(\epsilon_X(f, \hat{\Pi}))_j = c_s (-1)^{s+1} h^{2s+1} \frac{d^{2s+2} f}{dx^{2s+2}}((j+\theta) h),\quad -s-1 \le \theta \le s,
\label{eq_aux_2}
\end{equation}
with
\begin{equation}
c_s = \frac{s! (s+1)!}{(2s+2)!},
\label{eq_def_cs}
\end{equation}
and the function $\mathring{\lambda}(\phi)$ defined by \eqref{eq_lambda_phi} satisfies
\begin{equation}
\mathrm{Re}\mathring{\lambda}(\phi) = (2^s c_{s})  \sin^{2s+2}(\phi/2).
\label{eq_re_lambda}
\end{equation}
From here, $\mathrm{Re}\mathring{\lambda}(\phi) > 0$ for $\phi/(2\pi) \not\in \mathbb{Z}$. Thus Assumption~\ref{ass:2} holds.


Let $X$ be a mesh. For a function $f \in C(\mathbb{R})$, let $p_j[f]$ be the polynomial defined by \eqref{eqFV3} with $u_{j+k} = (\hat{\Pi}_X f)_{j+k}$.

\begin{lemma}
Let $g(x) = f(x+mh_{av})$. Then $p_{j+m}[f](x+mh_{av}) = p_j[g](x)$.
\end{lemma}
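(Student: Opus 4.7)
The plan is to unwind the definition of the reconstruction polynomial on both sides using the $m$-periodicity of the mesh, then invoke the uniqueness of the polynomial solving \eqref{eqFV3}.

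First I would record the basic consequence of $m$ being the period of $X$: since $h_{j+m+1/2}=h_{j+1/2}$ for every $j$, summing over one period gives $x_{j+m}-x_j=\sum_{k=0}^{m-1}h_{k+1/2}=mh_{av}$ (the sum over a period equals $m h_{av}$ because $m$ divides $N$ and the full mesh is $N$-periodic in $[0,2\pi)$). Therefore the cell $[x_{j+m+k},x_{j+m+k+1}]$ is exactly the translate of $[x_{j+k},x_{j+k+1}]$ by $mh_{av}$, and it has the same length $h_{j+k+1/2}$.

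Next I would translate the cell-averages. For every integer $k$, a change of variable $x=y+mh_{av}$ together with the identity $h_{j+m+k+1/2}=h_{j+k+1/2}$ yields
\begin{equation*}
(\hat{\Pi}_X f)_{j+m+k}=\frac{1}{h_{j+m+k+1/2}}\int_{x_{j+m+k}}^{x_{j+m+k+1}} f(x)\,dx=\frac{1}{h_{j+k+1/2}}\int_{x_{j+k}}^{x_{j+k+1}} g(y)\,dy=(\hat{\Pi}_X g)_{j+k}.
\end{equation*}

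Now set $q(x):=p_{j+m}[f](x+mh_{av})$. This is a polynomial of degree $p$, and by a second change of variable in \eqref{eqFV3} applied at index $j+m$,
\begin{equation*}
\frac{1}{h_{j+k+1/2}}\int_{x_{j+k}}^{x_{j+k+1}} q(x)\,dx=\frac{1}{h_{j+m+k+1/2}}\int_{x_{j+m+k}}^{x_{j+m+k+1}} p_{j+m}[f](x)\,dx=(\hat{\Pi}_X f)_{j+m+k}=(\hat{\Pi}_X g)_{j+k}
\end{equation*}
for $k=-p/2,\dots,p/2$. Thus $q$ satisfies the $(p+1)$ defining equations of $p_j[g]$, and by the well-posedness of \eqref{eqFV3} cited from \cite{Shu1997} we conclude $q=p_j[g]$, which is the claim. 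No step is really an obstacle; the only thing to be careful about is the identity $x_{j+m}=x_j+mh_{av}$, which relies on the period $m$ dividing $N$ rather than on averaging over an arbitrary window.
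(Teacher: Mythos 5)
Your proof is correct and follows essentially the same route as the paper: translate the defining system \eqref{eqFV3} for $p_{j+m}[f]$ by $mh_{av}$ using $x_{j+m+k}=x_{j+k}+mh_{av}$ and identify it with the system for $p_j[g]$. You merely make explicit two points the paper leaves implicit (that the minimal period $m$ divides $N$ so that $x_{j+m}=x_j+mh_{av}$, and the final appeal to uniqueness of the reconstruction polynomial), which is fine.
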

\begin{proof}
The system \eqref{eqFV3} to define $p_{j+m}[f]$ has the form
\begin{equation*}
\int\limits_{x_{j+m+k}}^{x_{j+m+k+1}} p_{j+m}(x) dx = \int\limits_{x_{j+m+k-1}}^{x_{j+m+k+1}} f(x) dx, \quad k = -p/2, \ldots, p/2.
\end{equation*}
Since $x_{j+m+k} = x_{j+k} + mh_{av}$ and $x_{j+m+k+1} = x_{j+k+1} + mh_{av}$, it reduces to
$$
\int\limits_{x_{j+k}}^{x_{j+k+1}} p_{j+m}(x+mh_{av}) dx = \int\limits_{x_{j+k}}^{x_{j+k+1}} f(x+mh_{av}) dx, \quad k = -p/2, \ldots, p/2,
$$
which coincides with the system for $p_j[g]$.

\end{proof}

\begin{lemma}
\label{th:eps_zeroav}
There holds
\begin{equation}
\sum\limits_{j=0}^{m-1} h_{j+1/2} (\epsilon_X(x^{p+1},\hat{\Pi}))_j = 0.
\label{eq_eps_zeroav}
\end{equation}
\end{lemma}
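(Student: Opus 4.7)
The plan is to split the left-hand side of \eqref{eq_eps_zeroav} into the derivative part and the reconstruction part, observe that each part nearly telescopes around the period of the mesh, and then match the residual terms using the translation lemma together with the exactness of the polynomial reconstruction on polynomials of degree $p$.

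First I would unfold the definition of the truncation error for the scheme \eqref{eqFV1}: for each $j$,
\begin{equation*}
h_{j+1/2} (\epsilon_X(f,\hat{\Pi}))_j = -h_{j+1/2} (\hat{\Pi}_X f')_j + p_j[f](x_{j+1}) - p_{j-1}[f](x_j).
\end{equation*}
For the derivative term, the definition \eqref{map_integral} immediately gives \mbox{$h_{j+1/2} (\hat{\Pi}_X f')_j = f(x_{j+1})-f(x_j)$}, so summation over $j=0,\ldots,m-1$ telescopes to $f(x_m)-f(x_0)$. For the reconstruction term, a shift of summation index gives the almost-telescoping identity
\begin{equation*}
\sum_{j=0}^{m-1}\bigl(p_j[f](x_{j+1})-p_{j-1}[f](x_j)\bigr) = p_{m-1}[f](x_m) - p_{-1}[f](x_0).
\end{equation*}

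The crux is therefore to show that this last difference equals $f(x_m)-f(x_0)$ when $f(x)=x^{p+1}$. To do so I would apply the previous lemma with $j=-1$ and $g(x)=f(x+mh_{av})$, which yields
\begin{equation*}
p_{m-1}[f](x_m) = p_{m-1}[f](x_0 + mh_{av}) = p_{-1}[g](x_0).
\end{equation*}
Linearity of the reconstruction operator $f\mapsto p_j[f]$ in $f$ then gives $p_{-1}[g] = p_{-1}[f] + p_{-1}[g-f]$. For $f(x)=x^{p+1}$ the difference $g(x)-f(x) = (x+mh_{av})^{p+1}-x^{p+1}$ is a polynomial of degree $p=2s$, so it is reproduced exactly by the $p$-th order reconstruction: $p_{-1}[g-f](x) = g(x)-f(x)$. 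Evaluating at $x=x_0$ yields
\begin{equation*}
p_{m-1}[f](x_m) - p_{-1}[f](x_0) = g(x_0)-f(x_0) = f(x_m)-f(x_0),
\end{equation*}
which exactly cancels the derivative contribution and gives \eqref{eq_eps_zeroav}.

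I expect the main obstacle to be conceptual rather than computational, namely identifying that the correct way to handle the ``boundary'' terms $p_{-1}[f](x_0)$ and $p_{m-1}[f](x_m)$ is to use the translation lemma to convert one into a reconstruction of the shifted function $g$, and then to exploit the fact that only the polynomial-of-degree-$p$ piece of $g-f$ matters, on which the reconstruction is exact. One subtlety worth checking is the identity $x_{j+m} = x_j + mh_{av}$, which follows from $h_{j+m+1/2}=h_{j+1/2}$ together with $\sum_{k=0}^{m-1} h_{k+1/2} = mh_{av}$ (a consequence of $x_{j+N}=x_j+2\pi$ and $h_{av}=2\pi/N$); everything else is routine bookkeeping.
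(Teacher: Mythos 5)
Your proof is correct and follows essentially the same route as the paper: both arguments rest on the telescoping structure of $\sum_j h_{j+1/2}(\epsilon_X)_j$ combined with the translation lemma, linearity of the reconstruction, and its exactness on polynomials of degree $\le p$ applied to $(x+mh_{av})^{p+1}-x^{p+1}$. The only cosmetic difference is that the paper packages the boundary matching into the single quantity $\Delta_j[f]=p_{j-1}[f](x_j)-f(x_j)$ and shows $\Delta_m=\Delta_0$, whereas you handle the derivative and reconstruction contributions separately.
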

\begin{proof}
By construction,
$$
(\epsilon_X(x^{p+1},\hat{\Pi}))_j = \frac{1}{h_{j+1/2}}(\Delta_{j+1}[x^{p+1}] - \Delta_{j}[x^{p+1}])
$$
with $\Delta_{j}[f] = p_{j-1}[f](x_{j}) - f(x_{j})$. 
By construction, $\Delta_{j}[f]$ has the following properties:
\begin{enumerate} 
\item $\Delta_{j}[f]$ is linear with respect to $f$;
\item $\Delta_{j}[f] = 0$ if the function $f$ is a polynomial of order not greater than $p$;
\item if $g(x) = f(x+mh_{av})$, then $\Delta_{j+m}[f] = \Delta_{j}[g]$.
\end{enumerate}
Using these properties we see that 
\begin{equation*}
\begin{gathered}
\Delta_{m}[x^{p+1}] =\Delta_{0}[(x+mh_{av})^{p+1}] = 
\\
\Delta_{0}[x^{p+1}] + \Delta_{0}[(x+mh_{av})^{p+1}-x^{p+1}]=\Delta_{0}[x^{p+1}].
\end{gathered}
\end{equation*}
From here, \eqref{eq_eps_zeroav} is obvious.
\end{proof}

\begin{lemma}
\label{th:aux:10}
There exists $\mu\ :\ \mathbb{N} \to (0,\infty)$ and a local mapping $\tilde{\Pi}$ of the form
\begin{equation}
(\tilde{\Pi}_X f)_j = \frac{1}{h_{j+1/2}} \int\limits_{x_{j}}^{x_{j+1}} f(x) dx + \mathfrak{C}_j^{(X)} (h_{av}(X))^{p+1} \frac{d^{p+1} f}{dx^{p+1}}(x_j),
\label{eq_tildepih}
\end{equation}
where $\mathfrak{C}_j^{(X)}$ is an $m(X)$-periodic real-valued sequence such that the system \eqref{eqFV1} is $(p+1)$-exact on each mesh $X$ from a family $\mathcal{F}_{\mu}$ given by \eqref{eq_def_fmu}. Besides, \mbox{$|\mathfrak{C}_j^{(X)}| \le c_1(h_{\max}-h_{\min})/h_{av}$} and $c_1$ depends only on $p$ and $m(X)$.
\end{lemma}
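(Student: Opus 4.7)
If $m(X)=1$ the mesh is uniform; by \eqref{eq_aux_2} we have $\epsilon_X(x^{p+1},\hat{\Pi})\equiv 0$ (since $(x^{p+1})^{(p+2)}=0$), so $\mathfrak{C}_j\equiv 0$ and $\tilde{\Pi}=\hat{\Pi}$ works with any $\mu(1)>0$. Henceforth let $m:=m(X)\ge 2$. Because the scheme is $p$-exact in the sense of $\hat{\Pi}$ and because the correction term in $\tilde{\Pi}$ annihilates polynomials of degree $\le p$, it suffices to arrange $\epsilon_X(x^{p+1},\tilde{\Pi})\equiv 0$. A direct calculation, using $(x^{p+1})'=(p+1)x^p$ (whose $(p+1)$-th derivative vanishes), $(x^{p+1})^{(p+1)}=(p+1)!$, and the homogeneity \eqref{eq_invalpha} to extract the factor $h_{av}^{-1}$, gives
\begin{equation*}
(\epsilon_X(x^{p+1},\tilde{\Pi}))_j = (\epsilon_X(x^{p+1},\hat{\Pi}))_j + (p+1)!\,h_{av}^{p}\,(\mathcal{L}(\gamma)\mathfrak{C})_j.
\end{equation*}
Thus $(p+1)$-exactness reduces to the linear system
\begin{equation*}
\mathcal{L}(\gamma)\mathfrak{C} = -E(\gamma),\qquad E_j(\gamma) := \frac{(\epsilon_X(x^{p+1},\hat{\Pi}))_j}{(p+1)!\,h_{av}^{p}},
\end{equation*}
on real $m$-periodic sequences, where $E(\gamma)$ is a dimensionless, real-analytic function of $\gamma$ near $0$ with $E(0)=0$ (again by \eqref{eq_aux_2}).

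Restricted to $m$-periodic sequences, $\mathcal{L}(\gamma)$ is represented by an $m\times m$ real matrix $M(\gamma)$, analytic in $\gamma$ at $0$. Lemma \ref{th:L:ev} gives $\sigma(M(0))=\{\mathring{\lambda}(2\pi l/m):l=0,\dots,m-1\}$, and Assumption \ref{ass:2} forces $0$ to be a simple eigenvalue of $M(0)$, so $\dim\ker M(0)=\dim\mathrm{coker}\,M(0)=1$. Telescoping \eqref{eqFV1} against $h_{j+1/2}$ over one period yields $\sum_{j=0}^{m-1}h_{j+1/2}(\mathcal{L}(\gamma)u)_j=0$ for every $m$-periodic $u$, i.e., the row vector $\ell(\gamma):=(1+\gamma_0,\dots,1+\gamma_{m-1})$ lies in the left null space of $M(\gamma)$ for \emph{all} $\gamma$ in a neighbourhood of $0$. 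By upper semicontinuity of spectral dimensions and $\ell(\gamma)\ne 0$, for $|\gamma|<\delta(m)$ with $\delta(m)>0$ small enough the vector $\ell(\gamma)$ spans $\mathrm{coker}\,M(\gamma)$, so the Fredholm compatibility condition for $M(\gamma)\mathfrak{C}=-E(\gamma)$ is $\sum_j(1+\gamma_j)E_j(\gamma)=0$---which is precisely Lemma \ref{th:eps_zeroav} after dividing by $(p+1)!\,h_{av}^{p+1}$.

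Hence for $|\gamma|\le\delta(m)$ a solution $\mathfrak{C}(\gamma)$ exists; I fix the unique one satisfying $\sum_j\mathfrak{C}_j=0$. On the hyperplane $\{\sum_j x_j=0\}$ the operator $M(\gamma)$ restricts to an isomorphism onto its image, so $\mathfrak{C}(\gamma)$ is real-analytic in $\gamma$ with $\mathfrak{C}(0)=0$, yielding $|\mathfrak{C}_j|\le c_1(p,m)|\gamma|\le c_1(p,m)(h_{\max}-h_{\min})/h_{av}$ on a possibly smaller ball; take $\mu(m)$ to be its radius. The remaining local-mapping axioms---$m$-periodicity of $\mathfrak{C}_j^{(X)}$, mesh-scale invariance, $\langle\mu_j^{(X)},1\rangle=1\ne 0$---are immediate since $\mathfrak{C}_j$ depends only on the dimensionless $\gamma$ and the correction term vanishes on constants. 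The conceptual obstacle is the Fredholm step: pinning down $\mathrm{coker}\,M(\gamma)$ for \emph{every} small $\gamma$ (not only at $\gamma=0$) via the conservation identity, and then recognising Lemma \ref{th:eps_zeroav} as exactly the resulting compatibility condition.
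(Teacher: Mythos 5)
Your proposal is correct and follows essentially the same route as the paper: reduce to $f=x^{p+1}$ using $p$-exactness of $\hat{\Pi}$, turn $(p+1)$-exactness into the linear system $L(\gamma,0)\mathfrak{C}=-h_{av}^{-p}\epsilon_X(f,\hat{\Pi})$, use conservativity to identify the left null vector $(h_{1/2},\dots,h_{m-1/2})$, invoke Lemma~\ref{th:L:ev} plus strict dissipativity to make zero a simple eigenvalue, and recognize Lemma~\ref{th:eps_zeroav} as the Fredholm compatibility condition, with the bound $|\mathfrak{C}_j|\le c_1|\gamma|$ coming from $\epsilon_X(f,\hat{\Pi})$ vanishing analytically at $\gamma=0$. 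The only (harmless) differences are your explicit treatment of $m=1$ and your normalization $\sum_j\mathfrak{C}_j=0$ to pin down a unique analytic branch, where the paper simply selects any solution obeying the $2/|\lambda|_{\min}$ bound.
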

\begin{proof}
Since the system \eqref{eqFV1} is $p$-exact in the sense of $\hat{\Pi}$ by construction, it is $p$-exact in the sense of each $\tilde{\Pi}$ of the form \eqref{eq_tildepih}. By linearity, to prove $(p+1)$-exactness, it is enough to consider only $f(x) = x^{p+1}/(p+1)!$.

There holds
$(\tilde{\Pi}_X f')_j = (\hat{\Pi}_X f')_j$ and $(\tilde{\Pi}_X f)_j = (\hat{\Pi}_X f)_j + h_{av}^{p+1} \mathfrak{C}_j^{(X)}$. Then
\begin{gather*}
(\epsilon_X(f, \tilde{\Pi}))_j = -
(\hat{\Pi}_X f')_j 
+
\\
+
\frac{1}{h_{av}} \sum\limits_{k=-s-1}^s a_{k}\!\left(\frac{h_{j-S+1/2}}{h_{av}}, \ldots, \frac{h_{j+S-1/2}}{h_{av}}\right)\left[(\hat{\Pi}_X f)_{j+k} + h_{av}^{p+1} \mathfrak{C}_{j+k}^{(X)}\right]
=
\\
=
(\epsilon_X(f, \hat{\Pi}))_j +
\sum\limits_{k=-s-1}^s a_{k}\!\left(\frac{h_{j-S+1/2}}{h_{av}}, \ldots, \frac{h_{j+S-1/2}}{h_{av}}\right) h_{av}^{p} \mathfrak{C}_{j+k}^{(X)}.
\end{gather*}
Equating the right-hand side to zero, we get the system
$$
\mathcal{L}(\gamma) \mathfrak{C}^{(X)} = -h_{av}^{-p}\epsilon_X(f,\hat{\Pi})
$$
with respect to $\mathfrak{C}^{(X)} = \{\mathfrak{C}_j^{(X)}, j \in \mathbb{Z}\}$.
For $m(X)$-periodic $\mathfrak{C}^{(X)}$, this system reduces to
\begin{equation}
L(\gamma, 0) \left(\begin{array}{c} \mathfrak{C}_0^{(X)} \\ \vdots \\ \mathfrak{C}_{m-1}^{(X)} \end{array}\right) = -h_{av}^{-p} \left(\begin{array}{c} (\epsilon_X(f,\hat{\Pi}))_0 \\ \vdots \\ (\epsilon_X(f,\hat{\Pi}))_{m-1} \end{array}\right).
\label{sys_for_cx}
\end{equation}

By construction, the scheme \eqref{eqFV1} is conservative, i. e. for each $N$-periodic sequence  $u$ there holds
\begin{equation}\label{cons0}
\sum_{j=0}^{N-1}h_{j+1/2} (\mathcal{L}(\gamma) u)_j =0.
\end{equation}
For $m(X)$-periodic sequences the property \eqref{cons0} takes the form
\begin{equation*}
\sum_{j=0}^{m-1}h_{j+1/2} (\mathcal{L}(\gamma) u)_j =0.
\end{equation*}
From here, recalling \eqref{eq_m}, \eqref{eq_oper_m}, \eqref{eq_def_L_gamma_phi}, we get
$$
\sum_{j=0}^{m-1}h_{j+1/2} (L(\gamma,0) U)_j =0 \quad \text{for all}\quad U \in \mathbb{C}^m,
$$
which means that $(h_{1/2}, \ldots, h_{m-1/2})$ is the left eigenvector of $L(\gamma, 0)$ corresponding to zero eigenvalue. The spectrum of $L(0,0)$ is given by Lemma~\ref{th:L:ev}. By \eqref{eq_re_lambda}, zero is a simple eigenvalue of $L(0,0)$. By continuity, it remains simple for sufficiently small $|\gamma|$. Hence, the consistency of \eqref{sys_for_cx} is equivalent to \eqref{eq_eps_zeroav} and holds by Lemma~\ref{th:eps_zeroav}.

Since $L(\gamma,0)$ is continuous in $\gamma$, and $L(0,0)$ is a circulant matrix, then for a sufficiently small $|\gamma|$ the system \eqref{sys_for_cx} admits a solution such that
$$
\left(\sum_{j=0}^{m-1} |\mathfrak{C}_j^{(X)}|^2\right)^{1/2} \le \frac{2}{|\lambda|_{\min}}
\left(\sum_{j=0}^{m-1} |h_{av}^{-p} (\epsilon_X(f,\hat{\Pi}))_{j}|^2\right)^{1/2},
$$
where $|\lambda|_{\min}$ is a minimal nonzero eigenvalue modulus of $L(\gamma, 0)$. The expressions $h_{av}^{-p}(\epsilon_X(f,\hat{\Pi}))_{j}$ do not depend on $h_{av}$ for a fixed $\gamma$, are equal to zero when $\gamma=0$, and are smooth in $\gamma$, thus their norms are bounded by $C|\gamma|$ as $|\gamma| \to 0$.

Let $\mathcal{F}_{\mu}$ be a set of meshes with $|\gamma|$ small enough, such that the conditions above hold. For $X \in \mathcal{F}_{\mu}$ define $\mathfrak{C}_j^{(X)}$ by \eqref{sys_for_cx}, and for $X \not \in \mathcal{F}_{\mu}$ put \mbox{$\mathfrak{C}_j^{(X)} = 0$}. The inequality $|\mathfrak{C}_j^{(X)}| \le c_1(h_{\max}-h_{\min})/h_{av}$ follows from $|\gamma| \le (h_{\max}-h_{\min})/h_{av}$.

The fact that the mapping \eqref{eq_tildepih} is local follows from the fact that $\mathfrak{C}_j^{(X)}$ do not depend on $h_{av}$ if $\gamma$ is fixed.
\end{proof}

For $q \in \mathbb{N}$ denote
$$
|v_0|_q = \sup\limits_{x \in \mathbb{R}}  \left|\frac{d^q v_0}{d x^q}(x)\right|.
$$

\begin{theorem}
Consider the scheme \eqref{eqFV1}--\eqref{eqFV3} with the polynomial reconstruction of order $p = 2s$, $s \in \mathbb{N} \cup \{0\}$.
For each $\delta>0$ there exists $\mu : \mathbb{N} \to (0,\infty)$ that depends on $p$ and $\delta$ only, such that for each mesh $X \in \mathcal{F}_{\mu}$ and for each $2\pi$-periodic \mbox{$v_0 \in C^{p+2}(\mathbb{R})$} the solution $u(t)$ of \eqref{eqFV1}--\eqref{eqFV3} satisfies
\begin{equation}
\begin{gathered}
\frac{1}{\sqrt{2\pi}}
\left(\sum\limits_{j=1}^{N(X)} h_{j+1/2} \Biggl|u_j(t) - \frac{1}{h_{j+1/2}} \int\limits_{x_{j}}^{x_{j+1}} v_0(x-t) dx \Biggr|^2\right)^{1/2} \le
\\
\le
 c |v_0|_{p+1} h_{\max}^{p} (h_{\max} - h_{\min}) + (c_s+\delta)|v_0|_{p+2} h_{\max}^{p+1} t,
\end{gathered}
\label{eq_est_FV}
\end{equation}
where $c$ depends only on $m(X)$ and $p$, and the constant $c_s$ is given by \eqref{eq_def_cs}.
\label{th:1}
\end{theorem}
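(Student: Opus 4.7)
The plan is to combine the stability result of Theorem~\ref{th:stab} with the perturbed local mapping $\tilde{\Pi}$ supplied by Lemma~\ref{th:aux:10}, which makes the scheme $(p+1)$-exact. Note first that Theorem~\ref{th:stab} applies to \eqref{eqFV1}: formula \eqref{eq_re_lambda} shows $\varkappa = 2s+1 = p+1$, Assumption~\ref{ass:2} holds, and the scheme is $p$-exact in the sense of $\hat{\Pi}$ by construction, so in particular $(\varkappa-1)$-exact. Thus for every $K>1$ there is a $\mu$ such that \eqref{eq_9e1hio} holds for all $X \in \mathcal{F}_{\mu}$.

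Setting $v(t,x) = v_0(x-t)$ and $e_j(t) := u_j(t) - (\tilde{\Pi}_X v(t,\cdot))_j$, I split
\begin{equation*}
u_j(t) - (\hat{\Pi}_X v(t,\cdot))_j = e_j(t) + \bigl[(\tilde{\Pi}_X v(t,\cdot))_j - (\hat{\Pi}_X v(t,\cdot))_j\bigr].
\end{equation*}
By \eqref{eq_tildepih} and the bound $|\mathfrak{C}_j^{(X)}|\le c_1(h_{\max}-h_{\min})/h_{av}$ from Lemma~\ref{th:aux:10}, the bracket is controlled in $\|\cdot\|_{av}$ by $c_1(h_{\max}-h_{\min})h_{\max}^{p}|v_0|_{p+1}$, which is precisely the first term of \eqref{eq_est_FV}.

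Using $v_t+v_x=0$, a direct computation shows
\begin{equation*}
\frac{de}{dt} + \frac{1}{h_{av}}\mathcal{L}(\gamma) e = -\epsilon_X(v(t,\cdot),\tilde{\Pi}), \qquad e_j(0) = -\mathfrak{C}_j^{(X)}h_{av}^{p+1}v_0^{(p+1)}(x_j).
\end{equation*}
The Duhamel representation, combined with the semigroup bound $\|\exp(-\tau\mathcal{L}(\gamma)/h_{av})\|_{av\to av}\le K$ which is the operator form of Theorem~\ref{th:stab}, yields
\begin{equation*}
\|e(t)\|_{av} \le K\|e(0)\|_{av} + K\int_0^t \|\epsilon_X(v(s,\cdot),\tilde{\Pi})\|_{av}\,ds,
\end{equation*}
and the initial contribution $\|e(0)\|_{av}$ is bounded exactly like the bracket above and thus absorbed into the first term of \eqref{eq_est_FV}.

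The main obstacle, and the place where the constant $c_s$ appears, is the truncation estimate
\begin{equation*}
\|\epsilon_X(v(s,\cdot),\tilde{\Pi})\|_{av}\le (c_s+\delta')\,h_{\max}^{p+1}\,|v_0|_{p+2}
\end{equation*}
with $\delta'>0$ as small as desired when $|\gamma|$ is small. Since $\tilde{\Pi}$ is $(p+1)$-exact, for each $j$ the truncation error is unchanged if I subtract from $v(s,\cdot)$ its Taylor polynomial of degree $p+1$ centered at $x_j$; the remainder is $O(h_{av}^{p+2}|v_0|_{p+2})$ on the stencil, whence $|(\epsilon_X(v(s,\cdot),\tilde{\Pi}))_j|\le C(\gamma)h_{av}^{p+1}|v_0|_{p+2}$ with $C(\gamma)$ continuous in $\gamma$. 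At $\gamma=0$ the correction vanishes ($\mathfrak{C}^{(X)}=0$, since $\epsilon_X(x^{p+1},\hat{\Pi})=0$ on uniform meshes by \eqref{eq_aux_2}), so $\tilde{\Pi}=\hat{\Pi}$ and \eqref{eq_aux_2} gives exactly $C(0)=c_s$. Shrinking $\mu(m)$ further so that $C(\gamma)\le c_s+\delta'$ for $|\gamma|\le\mu(m)$, and choosing $K$ close enough to $1$ so that $K(c_s+\delta')\le c_s+\delta$, completes the proof of \eqref{eq_est_FV}.
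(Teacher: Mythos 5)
Your argument is correct and follows essentially the same route as the paper: stability from Theorem~\ref{th:stab}, the corrected mapping $\tilde{\Pi}$ from Lemma~\ref{th:aux:10} to split off an $O(h_{\max}^{p}(h_{\max}-h_{\min}))$ discrepancy, and the Taylor-remainder argument with continuity in $\gamma$ to recover the sharp constant $c_s+\delta$; your Duhamel formula for $e=u-\tilde{\Pi}_X v$ is just a repackaging of the paper's three-term splitting through the auxiliary discrete solution $\tilde{u}$ with initial data $\tilde{\Pi}_X v_0$. The only step you omit is the final passage from $\|\cdot\|_{av}$ (weights $h_{av}$) to the $h_{j+1/2}$-weighted norm on the left of \eqref{eq_est_FV}; the ratio of these norms is at most $\sqrt{h_{\max}/h_{av}}$ and is driven to $1$ by shrinking $\mu$, as the paper notes in its closing sentence.
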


On uniform meshes one may put $\delta=0$, then the right-hand side of  \eqref{eq_est_FV} reduces to $c_s |v_0|_{p+2}h_{\max}^{p+1}t$, which is the standard error estimate for the finite-difference scheme of order $p+1$. 

\begin{proof}
On the uniform meshes, the truncation error has the order $p+1$. Since $p+1 = 2s+1$ is odd, then $\varkappa = p+1$. On a general mesh, the scheme is $(p+1)$-exact in the sense of $\tilde{\Pi}$. Thus, the assumptions of Theorem~\ref{th:stab} hold. Hence, for each  $K>1$ there exists $\mu\ :\ \mathbb{N} \to (0,\infty)$, such that for each $X \in \mathcal{F}_{\mu}$ the scheme is stable with the constant $K$.

Let the mapping \eqref{eq_tildepih} with its coefficients $\mathfrak{C}_j^{(X)}$ be given by Lemma~\ref{th:aux:10}.
Let $\tilde{u}(t)$ be the solution of \eqref{eqFV1}, \eqref{eqFV3} with the initial data $\tilde{\Pi}_X v_0$. Then
\begin{equation}
\begin{gathered}
\|u(t) - \hat{\Pi}_X v(t, \ \cdot\ )\|_{av} \le
\\
\le
\|u(t) - \tilde{u}(t)\|_{av} + \|\tilde{u}(t) - \tilde{\Pi}_X v(t, \ \cdot\ )\|_{av}
+ \|\hat{\Pi}_X v(t, \ \cdot\ ) - \tilde{\Pi}_X v(t, \ \cdot\ )\|_{av}.
\end{gathered}
\label{eq_err1}
\end{equation}
The last term on the right-hand side of \eqref{eq_err1} has an estimate \mbox{$c_1 |v_0|_{p+1} h_{av}^{p} (h_{\max}-h_{\min})$}.
By stability, for the first term we have
\begin{equation*}
\begin{gathered}
\|u(t) - \tilde{u}(t)\|_{av} \le K \|u(0) - \tilde{u}(0)\|_{av} =
\\
= K \|\hat{\Pi}_X v(0, \ \cdot\ ) - \tilde{\Pi}_X v(0, \ \cdot\ )\|_{av}
\le Kc_1 |v_0|_{p+1} h_{av}^{p} (h_{\max}-h_{\min}).
\end{gathered}
\end{equation*}
The middle term in the right-hand side of \eqref{eq_err1} is the solution error by a stable scheme, thus
$$
\|\tilde{u}(t) - \tilde{\Pi}_X v(t, \ \cdot\ )\|_{av} \le Kt \max\limits_{0<t'<t} \|\epsilon_X(v(t', \ \cdot\ ), \tilde{\Pi})\|_{av}.
$$

To obtain an estimate for $(\epsilon_X(v(t', \ \cdot\ ), \tilde{\Pi}))_j$ for a fixed $t'$, write
$$
v(t', x) = p(x) + q(x),
$$
where $p(x)$ is the Taylor polynomial of order $p+1$ of the function $v(t',x)$ at  \mbox{$x=x_j$}. Then $|q(x)| \le (x-x_j)^{p+2} |v_0|_{p+2}/(p+2)!$. Since the scheme is $(p+1)$-exact by construction of $\tilde{\Pi}$,
$$
(\epsilon_X(v(t', \ \cdot\ ), \tilde{\Pi}))_j = (\epsilon_X(q, \tilde{\Pi}))_j.
$$
The right-hand side of the last equation can be estimated by definition. On the uniform mesh, it has the representation \eqref{eq_aux_2}. Since $\mathfrak{C}_j^{(X)} \rightarrow 0$ as $\vert\gamma\vert \rightarrow 0$, for each $\delta' > 0$ one can take a set $\mathcal{F}_{\mu}$ such that for $X \in \mathcal{F}_{\mu}$ there holds
$$
\|\epsilon_X(v(t', \ \cdot\ ), \tilde{\Pi})\|_{av} \le (c_s+\delta') h_{\max}^{p+1} |v|_{p+2}.
$$
Taking $\delta' = K^{-1} (c_s+\delta)-c_s$ (for $K$ close enough to 1, this is positive), we get
$$
\|\tilde{u}(t) - \tilde{\Pi}_X v(t, \ \cdot\ )\|_{av} \le (c_s + \delta) t h_{\max}^{p+1} |v|_{p+2}.
$$
Combining the estimates for each term in the right-hand side of  \eqref{eq_err1}, we get
\begin{equation}
\|u(t) - \hat{\Pi}_X v(t, \ \cdot\ )\|_{av}
\le c |v_0|_{p+1} h_{\max}^{p} (h_{\max} - h_{\min}) + (c_s+\delta)|v_0|_{p+2} h_{\max}^{p+1} t.
\label{eq_weqweqweqw2390123e}
\end{equation}
By choosing $\mu$ the ratio of the norms in the left-hand sides of \eqref{eq_est_FV} and \eqref{eq_weqweqweqw2390123e} can be made as close to 1 as necessary. Then the estimate \eqref{eq_est_FV} follows.
\end{proof}

\section{Example: meshes with alternating steps}
\label{sect:alternating}

To illustrate the results of the paper, we consider a mesh with alternating steps, i. e. a mesh with period $m=2$. So we assume that $N$ is even; let $\xi \in [0,1)$ and $\gamma = (\xi, -\xi)$ be the mesh structure. In  terms of the mesh nodes, $x_{k} = k h_{av}$ if $k$ is even and $x_{k} = (k+\xi) h_{av}$ if $k$ is odd. Then  $h_{\max} = (1+\xi) h_{av}$, $h_{\min} = (1-\xi) h_{av}$.

The schemes with the polynomial reconstruction are defined above. 
Here we also study schemes based on divided differences. We consider the scheme R3, which is the 1D version of the ``$\beta$-scheme'' in \cite{Dervieux2000} and the scheme EBR3 (edge-based reconstruction) in \cite{Bakhvalov2017CAF}. And we consider the scheme R5, which is the 1D version of ``Method 2'' of LV5 in \cite{Dervieux2000} and SEBR5 in \cite{Bakhvalov2017CAF, Bakhvalov2022b}. 


The schemes R3 and R5 have the form
\begin{equation}
\frac{du_j}{dt} + \frac{F_{j+1/2} - F_{j-1/2}}{(h_{j+1/2}+h_{j-1/2})/2} = 0,
\label{eqR1}
\end{equation}
\begin{equation}
u_j(0) = v_0(x_j),
\label{eqR2}
\end{equation}
where for the scheme R3 we set
\begin{equation}
F_{j+1/2} = u_j + \frac{h_{j+1/2}}{2} \left(\frac{2}{3} \frac{u_{j+1} - u_j}{h_{j+1/2}} + \frac{1}{3} \frac{u_{j}-u_{j-1}}{h_{j-1/2}}\right)
\label{eqR3}
\end{equation}
and for the scheme R5 we take
\begin{equation}
\begin{gathered}
F_{j+1/2} = u_j + \frac{h_{j+1/2}}{2} \left(-\frac{1}{10} \frac{u_{j+2}-u_{j+1}}{h_{j+3/2}} + \frac{4}{5} \frac{u_{j+1} - u_j}{h_{j+1/2}} +
\right.
\\
\left.
+ \frac{11}{30} \frac{u_{j}-u_{j-1}}{h_{j-1/2}} - \frac{1}{15}\frac{u_{j-1} - u_{j-2}}{h_{j-3/2}}\right).
\end{gathered}
\label{eqR5}
\end{equation}
The coefficients in front of divided differences are taken to obtain the maximal order of accuracy on uniform meshes, namely, the 3-rd order for R3 and the 5-th order for R5. 

The schemes R3 and R5 are 1-exact in the sense of the mapping $(\Pi_X f)_j = f(x_j)$. However, for each of them there exist a mapping $\tilde{\Pi}$ of the form $(\tilde{\Pi}_X f)_j = f(x_j) + \mathfrak{C}_j f''(x_j)$ such that the scheme is 2-exact on a family of meshes of the form \eqref{eq_def_fmu}. The proof of this fact is in analogy with Lemma~\ref{th:aux:10}. So the scheme R3 satisfies the conditions of Theorem~\ref{th:stab} (2-exact, $\varkappa = 3$), and the scheme R5 does not (2-exact, $\varkappa = 5$).

For the stability analysis, it is convenient to use the form \eqref{eq_def_L_gamma_phi}, \eqref{Fourier_Im_Eq}. The scheme is stable iff
$$
\sup\limits_{\phi \in \mathbb{R}} \sup\limits_{\nu \ge 0} \|\exp(-\nu L(\gamma,\phi))\| < \infty.
$$

\subsection{Scheme based on second-order polynomials}

For the scheme based on the second order polynomial reconstruction, the matrices $L_{\zeta}(\gamma)$ in \eqref{eq_generalform} are
$$
L_{-1} = \frac{1}{2(9-\xi^2)}\left(\begin{array}{cc}
3-4\xi+\xi^2 & -18+10\xi \\
0 & 3+4\xi+\xi^2
\end{array}\right),
$$
$$
L_0 = \frac{1}{2(9-\xi^2)}\left(
\begin{array}{cc}
9-8\xi-\xi^2 & 6+2\xi \\
-18-10\xi &  9+8\xi-\xi^2
\end{array}
\right),
$$
$$
L_1 = \frac{1}{2(9-\xi^2)} \left(\begin{array}{cc}
0 & 0 \\
6-2\xi & 0
\end{array}\right).
$$
The matrix $L(\gamma,\phi)$ defined by \eqref{eq_def_L_gamma_phi} has two eigenvalues:
$$
\lambda_*(\phi) = \frac{12}{9-\xi^2} + O(\phi), \quad \lambda_0(\phi) = i\phi + \frac{1-\xi^2}{12} \phi^4 + O(\phi^5)
$$
as $\phi \rightarrow 0$. The numerical evaluation of the eigenvalues shows the condition $\mathrm{Re}\lambda(\phi) > 0$ holds for both eigenvalues, each $\phi \ne \pi k$ ($k \in \mathbb{Z}$), and each $\xi \in [0,1)$. Thus the scheme based on the second-order polynomial reconstruction is stable at least for all meshes with period $m=2$.

\subsection{Scheme R3}


Denote $\mathcal{H} = (1+\xi)/(1-\xi)$, then the coefficients of the block representation \eqref{eq_generalform} of the scheme R3 are
$$
L_{-1} = \frac{1}{6}\left(\begin{array}{cc}
\mathcal{H}^{-1} &
-4 - \mathcal{H} - \mathcal{H}^{-1} \\
0 & \mathcal{H}
\end{array}\right),
$$
$$
L_0 = \frac{1}{6}\left(
\begin{array}{cc}
2 + \mathcal{H} & 2\\
-4 - \mathcal{H} - \mathcal{H}^{-1} &
2 + \mathcal{H}^{-1}
\end{array}
\right), \
L_1 = \frac{1}{6} \left(\begin{array}{cc}
0 & 0 \\
2 & 0
\end{array}\right).
$$
The matrix $L(\gamma,\phi)$ has two eigenvalues:
$$
\lambda_*(\gamma,\phi) = \frac{4}{3 (1-\xi^2)} + O(\phi),
$$
$$
\lambda_0(\gamma,\phi) = i\phi + \frac{1}{3} i\xi^2 \phi^3
+ \frac{1}{12} (1-\xi^2) (1 - 4\xi^2) \phi^4 + O(\phi^5)
$$
as $\phi \rightarrow 0$. For $\xi < 1/2$ both eigenvalues have non-negative real parts near $\phi=0$.  A detailed analysis shows that for $\xi \le 1/2$ the condition $\mathrm{Re}\lambda(\phi) > 0$ holds for each $\phi \ne \pi k$ and both eigenvalues. However, if $\xi > 1/2$, in a punctured neighborhood of $\phi=0$ there holds $\mathrm{Re}\lambda_0(\phi) < 0$. This means that the R3 scheme is unstable. Moreover, the instability develops on low-frequency modes.

\subsection{Scheme R5}

The coefficients of the block representation \eqref{eq_generalform} of the scheme R5 are
$$
L_{-2} = \frac{1}{60}\left(\begin{array}{cc} 0 & -2 \\ 0 & 0 \end{array}\right), \quad
L_{-1} = \frac{1}{60}\left(\begin{array}{cc} 4 + 11 \mathcal{H}^{-1} & -38-11\mathcal{H}-11\mathcal{H}^{-1} \\ -2 & 4 + 11 \mathcal{H} \end{array}\right),
$$
$$
L_0 = \frac{1}{60}\left(\begin{array}{cc} 12+11\mathcal{H}-3\mathcal{H}^{-1} & 24+3\mathcal{H}+3\mathcal{H}^{-1} \\ -38-11\mathcal{H}-11\mathcal{H}^{-1} & 12+11\mathcal{H}^{-1}-3\mathcal{H} \end{array}\right),
$$
$$
L_{1} = \frac{1}{60}\left(\begin{array}{cc} -3\mathcal{H} & 0 \\ 24+3\mathcal{H}+3\mathcal{H}^{-1} & -3\mathcal{H}^{-1} \end{array}\right).
$$
The matrix $L(\gamma,\phi)$ has two eigenvalues:
$$
\lambda_*(\gamma,\phi) = \frac{16}{15 (1-\xi^2)} + O(\phi),
$$
$$
\lambda_0(\gamma,\phi) = i\phi + \frac{1}{3} i\xi^2 \phi^3
- \frac{5}{12} \xi^2 (1-\xi^2) \phi^4 + O(\phi^5)
$$
as $\phi \rightarrow 0$. Clearly, for $\xi \ne 0$, we have $\mathrm{Re} \lambda_0(\gamma,\phi) < 0$ in a punctured neighborhood of $\phi=0$. This means that the scheme R5 is unstable on any non-uniform mesh with period $m=2$.

All Taylor expansions in this section were obtained with the use of the Sage mathematics software.

\subsection{Numerical results}

Now we return to the finite-volume schemes with the polynomial reconstruction of order $p=2s$, $s \in \mathbb{N} \cup \{0\}$. Theorem~\ref{th:1} states its $(p+1)$-th order convergence. Although numerical data supporting this result was reported previously many times, here we present our numerical results for the sake of the reader.

Consider the Cauchy problem \eqref{eq_TE} with $v_0(x) = \sin x$ on meshes with alternating steps, which are defined in the beginning of this section and illustrated in Fig.~\ref{fig:checkerboard}. We consider the cases $p=2$ and $p=4$, and the cases $h_{\max}/h_{\min} \in \{1,2,3\}$. For the time integration, we use the 7-th order linear Runge~-- Kutta method with the Courant number 0.1, so the time integration error is negligible. At $t=1$, we evaluate the norm of the solution error defined by the left-hand side of \eqref{eq_est_FV}. 

The data collected in Table~1 and Table~2 confirm the $(p+1)$-th order convergence of the polynomial-based schemes.

\begin{table}[t]
\caption{\label{table:1}The norm of the solution error of the FV scheme based on the 2-nd order polynomials}
\begin{center}
\begin{tabular}{|c|c|c|c|c|c|c|}
\hline
$h_{av}$ & \multicolumn{2}{|c|}{$h_{\max}=h_{\min}$} & \multicolumn{2}{|c|}{$h_{\max}=2h_{\min}$} & \multicolumn{2}{|c|}{$h_{\max}=3h_{\min}$} \\
  & solution error & order & solution error & order & solution error & order  \\
\hline
$2\pi/20$ & $1.86 \cdot 10^{-3}$ & & $1.69 \cdot 10^{-3}$ & & $1.47 \cdot 10^{-3}$ & \\
$2\pi/40$ & $2.28 \cdot 10^{-4}$ & 3.03 & $2.04 \cdot 10^{-4}$ & 3.05 & $1.73 \cdot 10^{-4}$ & 3.08 \\
$2\pi/80$ & $2.86 \cdot 10^{-5}$ & 2.99 & $2.55 \cdot 10^{-5}$ & 3.00 & $2.16 \cdot 10^{-5}$ & 3.01 \\
$2\pi/160$ & $3.58 \cdot 10^{-6}$ & 3.00 & $3.19 \cdot 10^{-6}$ & 3.00 & $2.69 \cdot 10^{-6}$ & 3.00 \\
$2\pi/320$ & $4.47 \cdot 10^{-7}$ & 3.00 & $3.97 \cdot 10^{-7}$ & 3.00 & $3.36 \cdot 10^{-7}$ & 3.00 \\
\hline
\end{tabular}
\end{center}
\end{table}   

\begin{table}[t]
\caption{\label{table:2}The norm of the solution error of the FV scheme based on the 4-th order polynomials}
\begin{center}
\begin{tabular}{|c|c|c|c|c|c|c|}
\hline
$h_{av}$ & \multicolumn{2}{|c|}{$h_{\max}=h_{\min}$} & \multicolumn{2}{|c|}{$h_{\max}=2h_{\min}$} & \multicolumn{2}{|c|}{$h_{\max}=3h_{\min}$} \\
  & solution error & order & solution error & order & solution error & order  \\
\hline
$2\pi/20$ & $3.64 \cdot 10^{-5}$ & & $3.64 \cdot 10^{-5}$ & & $3.60 \cdot 10^{-5}$ & \\
$2\pi/40$ & $1.12 \cdot 10^{-6}$ & 5.02 & $1.13 \cdot 10^{-6}$ & 5.01 & $1.14 \cdot 10^{-6}$ & 4.98 \\
$2\pi/80$ & $3.53 \cdot 10^{-8}$ & 4.99 & $3.62 \cdot 10^{-8}$ & 4.97 & $3.69 \cdot 10^{-8}$ & 4.94 \\
$2\pi/160$ & $1.10 \cdot 10^{-9}$ & 5.00 & $1.14 \cdot 10^{-9}$ & 4.99 & $1.17 \cdot 10^{-9}$ & 4.98 \\
$2\pi/320$ & $3.45 \cdot 10^{-11}$ & 5.00 & $3.57 \cdot 10^{-11}$ & 5.00 & $3.67 \cdot 10^{-11}$ & 4.99 \\
\hline
\end{tabular}
\end{center}
\end{table}

\section{Conclusion}

In the current study, we considered a general scheme on a non-uniform mesh. We proved a sufficient condition for the $L_2$-stability for a sufficiently small deformation of the computational mesh --- Theorem~\ref{th:stab}. Basing on this result, we proved the $(p+1)$-th order convergence of the high-order finite-volume schemes with the polynomial reconstruction.

We also verified our condition by investigating the stability of three schemes on the checkerboard meshes, which have periodic cells of two nodes. The scheme with the second-order polynomial reconstruction was found to be stable for a general $h_{\max}/h_{\min}$ ratio. The scheme R3 is stable if and only if $h_{\max}/h_{\min} \le 3$. And the scheme R5, which does not satisfy the assumptions of Theorem~\ref{th:stab}, was found to be unstable for any $h_{\max}/h_{\min} \ne 1$.

\end{document}